\numberwithin{equation}{section} 
\numberwithin{figure}{section} 
  \theoremstyle{plain}
  \theoremstyle{plain}
  \theoremstyle{plain}
  \theoremstyle{remark}
  \theoremstyle{remark}
  \theoremstyle{plain}
\def\<{{\langle }}
\def\>{{\rangle }}
\def\<{{\langle }}
\def\>{{\rangle }}
\newcommand{\rmnum}[1]{\romannumeral #1}
\newcommand{\Rmnum}[1]{\expandafter\@slowromancap\romannumeral #1@}
\theoremstyle{plain}
\newtheorem{theorem}{Theorem}[section]
\newtheorem{lemma}[theorem]{Lemma}
\newtheorem{proposition}[theorem]{Proposition}
\newtheorem{remark}[theorem]{Remark}
\newtheorem{conjecture}[theorem]{Conjecture}
\title[rigidity of $4$-dimensional complete self-shrinkers in $\mathbb{R}^{5}$]
{rigidity of $4$-dimensional complete self-shrinkers in $\mathbb{R}^{5}$}
\author{Chengyang Yi}
\address{Chengyang Yi \\  School of Mathematical Sciences, East China Normal University,
500 Dongchuan Road, Shanghai 200241, P. R. of China, E-mail address: 52195500013@stu.ecnu.edu.cn. }
\begin{document}
\maketitle

\begin{abstract}
We show that any $4$-dimensional complete self-shrinker in $\mathbb{R}^{5}$ with constant squared norm $S$ of the second fundamental form, $f_{3}=0$ and constant $f_{4}$ is isometric to $\mathbb{R}^{4}$, where $h_{ij}$ are components of the second fundamental form, $S=\sum h_{ij}^{2}$, $f_{3}=\sum h_{ij}h_{jk}h_{ki}$ and $f_{4}=\sum h_{ij}h_{jk}h_{kl}h_{li}$. As an application, we obtain a classification result.
\end{abstract}

\footnotetext{This work was partially supported by Science and Technology Commission of Shanghai Municipality (No. 22DZ2229014) and the National Natural Science Foundation of China (Grant No. 12271163). The research is supported by Shanghai Key Laboratory of PMMP.
}

\section{Introduction}
A family of smoothly immersed submanifolds $X(t):M^{n}\rightarrow \mathbb{R}^{n+p}$ for $t\in [0,T)$ satisfies mean curvature flow if
\begin{equation}
\frac{\partial X(t)}{\partial t}=\overrightarrow{H}(t),
\end{equation}
where $\overrightarrow{H}(t)$ denotes mean curvature vector of submanifold $X(t):M^{n}\rightarrow \mathbb{R}^{n+p}$. An immersion $X:M^{n}\rightarrow \mathbb{R}^{n+p}$ is called a self-shrinker if it satisfies
\begin{equation}
\overrightarrow{H}+X^{\perp}=0,
\end{equation}
 where $\perp$ is the projection onto the normal bundle of $M$. Actually, self-shrinkers correspond to self-shrinking solutions to the mean curvature flow. Moreover, they play an important role in the study on singularities of the mean curvature flow. One can refer to \cite{9,12,13}.

 It's very important to classify complete self-shrinkers. There are many results about the classification of self-shrinkers with co-dimension one, i.e. $p=1$. For $n=1$, Abresch and Langer \cite{1} gave a complete classification of closed self-shrinkers. Now, these curves are so-called Abresch-Langer curves. In higher dimension, Huisken \cite{12,13} proved that complete $n$-dimensional self-shrinkers in $\mathbb{R}^{n+1}$ with non-negative mean curvature, bounded $|A|$ and polynomial volume growth are $\Gamma \times \mathbb{R}^{n-1}$ or $S^{m}(\sqrt{m})\times\mathbb{R}^{n-m}(0\leq m \leq n)$, where $|A|$ is the norm of the second fundamental form and $\Gamma$ is a Abresch-Langer curve. Afterwards, Colding and Minicozzi \cite{9} removed the assumption that $|A|$ is bounded. In \cite{14}, Le and Sesum proved that any $n$-dimensional complete self-shrinker with polynomial volume growth in $\mathbb{R}^{n+1}$ must be a hyperplane $\mathbb{R}^{n}$ if $|A|<1$. Cao and Li \cite{2} proved that any complete self-shrinker with polynomial volume growth for arbitrary co-dimension must be a generalized cylinder provided $|A|\leq 1$. The above results show that there is a gap phenomena (so-called first gap) for $|A|$. As for the study of the second gap, we refer the readers to \cite{8,16}. It is very similar to the study of minimal submanifolds in the unit sphere (see below). So it's also interesting to classify complete self-shrinkers with constant squared norm of the second fundamental form. The following conjecture is well-known and very important.
\begin{conjecture}
An $n$-dimensional complete self-shrinker $X:M^{n}\rightarrow \mathbb{R}^{n+1}$ with constant squared norm of the second fundamental form is isometric to one of $\mathbb{R}^{n}$, $S^{k}(\sqrt{k})\times\mathbb{R}^{n-k}(0\leq k \leq n-1)$ and $S^{n}(\sqrt{n})$.
\end{conjecture}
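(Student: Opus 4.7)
The plan is to apply the Colding--Minicozzi drift Laplacian. Let $\mathcal{L}=\Delta-\langle X,\nabla(\cdot)\rangle$, the operator that is self-adjoint with respect to the Gaussian weight $e^{-|X|^{2}/4}$. On any codimension-one self-shrinker, a direct computation from $\overrightarrow{H}+X^{\perp}=0$ and the Codazzi equation yields the Simons-type identity $\mathcal{L}h_{ij}=(1-S)h_{ij}$, whence
\begin{equation*}
\tfrac{1}{2}\mathcal{L}S=S-S^{2}+|\nabla A|^{2}.
\end{equation*}
Using the hypothesis that $S$ is constant, $\mathcal{L}S=0$, so pointwise $|\nabla A|^{2}=S(S-1)$. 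This single identity already resolves most of the conjecture. If $S<1$, the right side is non-positive and vanishes only when $S=0$, forcing $A\equiv 0$ and $M=\mathbb{R}^{n}$. If $S=1$, then $\nabla A\equiv 0$; the shape operator is parallel, and combining this with the shrinker equation and the de Rham decomposition theorem, $M$ splits isometrically as $\Sigma^{k}\times\mathbb{R}^{n-k}$ with $\Sigma^{k}$ a totally umbilical self-shrinker. Imposing $|A|^{2}=1$ identifies $\Sigma^{k}$ with $S^{k}(\sqrt{k})$, yielding the full list $\mathbb{R}^{n}$, $S^{k}(\sqrt{k})\times\mathbb{R}^{n-k}$ ($1\le k\le n-1$), $S^{n}(\sqrt{n})$.

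The genuinely hard range is $S>1$, where $|\nabla A|^{2}=S(S-1)>0$ is a positive constant and no immediate contradiction is visible. The strategy here is to iterate the drift-Laplacian device on higher scalar invariants. One first computes $\mathcal{L}f_{3},\mathcal{L}f_{4},\ldots,\mathcal{L}f_{n}$ using $\mathcal{L}h_{ij}=(1-S)h_{ij}$ and the Codazzi equation, obtaining for each $k$ a polynomial identity relating $S,f_{3},\ldots,f_{k}$ with $|\nabla A|^{2}$, $|\nabla^{2}A|^{2}$ and algebraic expressions in $A$; simultaneously one computes $\mathcal{L}|\nabla A|^{2}$ and exploits its vanishing, since $|\nabla A|^{2}$ is itself a constant by the first step. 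The aim is that these identities, together with the Newton--MacLaurin inequalities among the elementary symmetric functions of the eigenvalues of $A$, force each $f_{k}$ to be a constant, then force the principal curvatures themselves to be constant, and finally pin them to $\{0,1\}$ via the shrinker equation $H=-\langle X,\nu\rangle$, which recovers the conjectured list.

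The principal obstacle -- and the reason this conjecture remains open in the stated generality -- is closing this iteration on a genuinely complete self-shrinker. Each step produces an $\mathcal{L}$-harmonic function that must be shown to be constant; on a compact or polynomially-growing shrinker this follows from the $\mathcal{L}$-self-adjointness and integration by parts in the Gaussian weight $e^{-|X|^{2}/4}$, but the statement above assumes neither compactness, polynomial volume growth, properness of the immersion, nor sign conditions on $H$, so the Gaussian-weighted $L^{2}$ framework is not a priori available. The hardest step is therefore a weighted Bernstein-type theorem that guarantees every $\mathcal{L}$-harmonic bounded function arising in the iteration is constant without such extra hypotheses. Any argument will have to supply this missing piece -- for instance through a sharper weighted Sobolev inequality, a parabolic argument along the shrinking flow itself, or an a priori Euclidean volume bound derived from constancy of $S$. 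Short of such new analytic input the iteration stalls precisely in the regime $S>1$, which is why the present paper chooses to impose the closing algebraic conditions $n=4$, $f_{3}=0$ and $f_{4}$ constant that collapse the system to a finite closed one.
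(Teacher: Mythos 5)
You have attempted to prove a statement that the paper itself labels a \emph{conjecture} and does not prove: Conjecture 1.1 is open, and the paper only establishes special cases (Theorems 1.4 and 1.5) under the additional hypotheses $n=4$, $f_{3}=0$ and $f_{4}$ constant. Your proposal is accordingly not a proof, and you say so yourself. What you do prove is correct as far as it goes: from $\tfrac{1}{2}\mathcal{L}S=|\nabla A|^{2}+S(1-S)$ (the paper's (2.7)) and constancy of $S$ one gets $|\nabla A|^{2}=S(S-1)$, which forces $S=0$ or $S\geq 1$; $S=0$ gives $\mathbb{R}^{n}$, and $S=1$ gives $\nabla A\equiv 0$, whence the classical splitting into $S^{k}(\sqrt{k})\times\mathbb{R}^{n-k}$ or $S^{n}(\sqrt{n})$ after imposing the shrinker equation. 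This matches how the paper itself opens Section 3 (``From (2.7), we know that $S=0$ or $S\geq 1$''). The entire content of the conjecture is the claim that $S>1$ cannot occur, and there your iteration is a heuristic, not an argument: you do not exhibit the closed system of identities, and ``the aim is that these identities \ldots force each $f_{k}$ to be a constant'' is a hope, not a deduction.

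I would also adjust your diagnosis of where the difficulty sits. You locate the obstruction in the absence of a Gaussian-weighted $L^{2}$ framework on a complete, possibly non-proper shrinker, and you call for a new weighted Bernstein theorem. But the analytic tool already exists and is exactly what this paper and its predecessors use: the Cheng--Peng generalized maximum principle (Lemma 2.3), which requires only Ricci curvature bounded from below, and that bound is automatic once $S$ is constant, via the Gauss equations. The place where the program genuinely stalls is algebraic, not analytic: along the max-principle sequence one only controls finitely many limits $\overline{\mu}_{i}$, $\overline{h}_{ijk}$, and without constancy of the higher invariants $f_{3},f_{4},\ldots$ one cannot differentiate them to close the system of equations on these limits. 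That is precisely why the paper imposes $f_{3}=0$ and constant $f_{4}$ in dimension $4$ (and why Cheng--Li--Wei impose constant $f_{3}$ or $f_{4}$ in dimension $3$): these hypotheses supply the missing equations (2.16)--(2.21) and (3.4)--(3.9) that make the eigenvalue analysis at the limit points finite and contradictory. Your proposal correctly identifies that such a collapse is needed but supplies no mechanism for achieving it without the extra hypotheses, so the conjecture remains exactly as open after your argument as before it.
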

In the case of dimension $2$, Ding and Xin \cite{11} confirmed the conjecture under the assumption of polynomial volume growth. By using of the generalized maximum principle for self-shrinkers \cite{7} (one can see Lemma 2.4 in Section 2), Cheng and Ogata \cite{6} have solved the above conjecture for $n=2$ affirmatively. But the conjecture is very difficult for the higher dimension $n$. For $n=3$, recently, Cheng, Li and Wei \cite{4,5} have proved the above conjecture is true under the assumption that $f_{3}$ or $f_{4}$ is constant.

It is necessary to mention Chern's conjecture for minimal hypersurfaces in the unit sphere which is closely related with the above conjecture.
\begin{conjecture}
Let $\mathscr{A}$ be the value of the squared norm of the second fundamental forms for $n$-dimensional closed minimal hypersurfaces in the unit sphere $S^{n+1}$ with constant scalar curvature, then the set of $\mathscr{A}$ should be discrete.
\end{conjecture}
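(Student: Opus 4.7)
The statement is Chern's longstanding conjecture on the discreteness of the $S$-spectrum of minimal hypersurfaces with constant scalar curvature; the plan is to run the analogue of the self-shrinker program of Cheng-Li-Wei \cite{4,5} used in the body of the paper. First, for any minimal hypersurface in $S^{n+1}$ the Gauss equation gives $R=n(n-1)-S$ with $S=\sum h_{ij}^{2}$, so constant scalar curvature is equivalent to constant $S$. Simons' identity in this setting reads
\[
\tfrac{1}{2}\Delta S = |\nabla A|^{2} + S(n-S),
\]
and constancy of $S$ immediately yields the pointwise equality $|\nabla A|^{2} = S(S-n)\ge 0$, which kills the range $S\in(0,n)$ and recovers the classical Chern--do Carmo--Kobayashi/Lawson gap. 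The interesting regime is $S>n$, and my plan is to derive constancy of the higher trace invariants $f_{3}$ and $f_{4}$ as an automatic consequence of constancy of $S$, paralleling the route of \cite{4,5}.

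With $S$, $f_{3}$, $f_{4}$ all constant, the plan is to differentiate Simons' identity, use the Codazzi equations $h_{ijk}=h_{ikj}$ and the Ricci identity to commute covariant derivatives, and derive Bochner-type elliptic identities for $|\nabla A|^{2}$, $f_{3}$ and $f_{4}$. Integration over the closed $M^{n}$ converts these into $L^{2}$ identities from which the aim is to force the principal curvatures $\lambda_{1},\dots,\lambda_{n}$ of $A$ to be pointwise constant as an unordered multi-set, i.e.\ to force $M^{n}$ to be isoparametric. Cartan's classification of minimal isoparametric hypersurfaces in $S^{n+1}$ would then leave only countably many candidates---the Clifford minimal tori $S^{k}(\sqrt{k/n})\times S^{n-k}(\sqrt{(n-k)/n})$ and the exceptional Cartan hypersurfaces with $g\le 6$ distinct principal curvatures---and each realizes a single value of $S$, so the set of possible $\mathscr{A}$ is discrete.

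The main obstacle, and the reason the conjecture is still open, is precisely the bootstrap from constancy of $S$ alone to constancy of $f_{3}$ and $f_{4}$. In the self-shrinker setting this step relies on the drift operator $\mathcal{L}=\Delta-\<X,\nabla\cdot\>$ together with the generalized maximum principle for self-shrinkers (Lemma 2.4 in Section 2), neither of which is directly available for a closed minimal hypersurface of $S^{n+1}$; one must substitute compactness and integration by parts for the drift and the maximum principle. The fourth-order remainder terms produced by this substitution are tightly coupled, and, to my knowledge, have not yet been tamed for general $n$. I would therefore expect the first genuinely new ingredient to be a sharpened Okumura-type inequality controlling $f_{4}-\frac{1}{n}S^{2}$ by $S\cdot|\nabla A|^{2}$ uniformly in $n$; threaded through the Bochner system above, such an inequality would collapse the continuum of hypothetical values of $S$ to the discrete isoparametric ones.
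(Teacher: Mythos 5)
The statement you were asked about is Conjecture 1.2 of the paper, i.e.\ Chern's conjecture for closed minimal hypersurfaces in $S^{n+1}$ with constant scalar curvature. The paper does not prove it and does not claim to: it is quoted purely as motivating background, with the only settled cases being $n=3$ (Chang \cite{3}) and the $4$-dimensional Willmore case (Deng--Gu--Wei \cite{10}). So there is no proof in the paper against which your argument can be matched, and your text should be judged as a standalone attempt on an open problem.

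As such, it is not a proof, and you say so yourself. The computations you do carry out are fine: the Gauss equation identification of constant scalar curvature with constant $S$, Simons' identity $\tfrac12\Delta S=|\nabla A|^2+S(n-S)$, and the resulting dichotomy $S=0$ or $S\ge n$ are all standard and correct. But the entire weight of the argument rests on the step you flag as ``the main obstacle'': deducing constancy of $f_3$ and $f_4$ (and ultimately pointwise constancy of the principal curvatures) from constancy of $S$ alone. No mechanism is supplied for this. The analogy with the self-shrinker program is also weaker than you suggest: even in the self-shrinker setting of this paper, constancy of $f_3$ and $f_4$ is \emph{assumed} as an additional hypothesis (Theorems 1.4 and 1.5), not derived from constancy of $S$; likewise Cheng--Li--Wei \cite{4,5} assume $f_3$ or $f_4$ constant. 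The ``sharpened Okumura-type inequality'' you invoke to close the loop is a wish, not a lemma. In short, your proposal correctly reconstructs the known reduction (constant $S$ plus isoparametricity would give discreteness via M\"unzner's $S=(g-1)n$ and the classification of $g$), but the passage from constant $S$ to isoparametric is exactly the open content of the conjecture, and it remains unproved here.
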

\begin{remark}
In the above conjecture, it's trivial for $n=2$. By Gauss equation, the condition of scalar curvature is equivalent to constant squared norm of the second fundamental form.
\end{remark}
For the dimension $3$, in 1993, Chang \cite{3} have solved Conjecture 1.2 (even for constant mean curvature hypersurfaces). It is very difficult for $n=4$. In 2017, Deng, Gu and Wei \cite{10} gave a complete classification result of closed $4$-dimension minimal Willmore hypersurfaces (equivalent to $f_{3}=0$) in $S^{5}$ with constant scalar curvature. The condition of $f_{3}=0$ gives some symmetries of the principle curvatures (see Proposition 2.3 in Section 2).

Inspired by \cite{10}, we consider Conjecture 1.1 in the case of $f_{3}=0$ for $n=4$, but it's still hard. Under the assumption that $f_{4}$ is constant, we have the following result:
\begin{theorem}	\label{th:01}
Let $X:M^{4}\rightarrow \mathbb{R}^{5}$ be a $4$-dimensional complete self-shrinker in $\mathbb{R}^{5}$ with constant squared norm $S$ of the second fundamental form, $f_{3}=0$ and constant $f_{4}$. Then $X:M^{4}\rightarrow \mathbb{R}^{5}$ is isometric to $\mathbb{R}^{4}$.
\end{theorem}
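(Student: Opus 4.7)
The plan is to argue by contradiction: assume the squared norm $S>0$ and derive a contradiction via the generalized maximum principle (Lemma 2.4), which will force $S\equiv 0$ and hence $X(M^4)=\mathbb{R}^{4}$.

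The first step translates the three constancy hypotheses into pointwise identities for $\nabla h$. Applying the self-shrinker Simons-type identity $\mathcal{L}h_{ij}=(1-S)h_{ij}$ (with $\mathcal{L}=\Delta-\langle X,\nabla\cdot\rangle$) successively to $S$, $f_3$ and $f_4$, and using that all three are constant, yields at any point where $h$ is diagonalized with eigenvalues $\lambda_1,\dots,\lambda_4$,
\begin{gather*}
|\nabla h|^{2}=S(S-1),\qquad \sum_{s,i,j}(h_{ij,s})^{2}\lambda_i=0,\\
2\sum_{s,i,j}(h_{ij,s})^{2}\lambda_i^{2}+\sum_{s,i,j}(h_{ij,s})^{2}\lambda_i\lambda_j=(S-1)f_{4}.
\end{gather*}
In particular $S=0$ or $S\geq 1$, and the task is to rule out $S\geq 1$. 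I would then exploit $n=4$ together with $f_{3}=0$: the power sums are $p_1=H$, $p_2=S$, $p_3=0$, $p_4=f_4$, so by Newton's identities the characteristic polynomial of $h$ is determined by the single scalar $H$, and the symmetry supplied by Proposition 2.3 lets one express the diagonal derivatives $h_{ii,s}$ as scalar multiples of $H_{,s}$.

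The main step is to show that $H$ is constant. Since $|A|^{2}=S$ is constant, $H^{2}\leq 4S$ is bounded, so the generalized maximum principle applies to $H^{2}$. Using $\mathcal{L}H=(1-S)H$ we have $\mathcal{L}(H^{2})=2(1-S)H^{2}+2|\nabla H|^{2}$; picking a sequence $(x_k)$ along which $H^{2}(x_k)\to\sup H^{2}$, $|\nabla H|(x_k)\to 0$ and $\limsup\mathcal{L}(H^{2})(x_k)\leq 0$, I would substitute the three pointwise identities above (rewriting the off-diagonal $h_{ij,s}$ via Codazzi and the pairing from Proposition 2.3) to force $\nabla H\equiv 0$. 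The algebraic manipulation needed to handle the off-diagonal components, which are not directly tied to $\nabla H$, is the real obstacle; one may need to iterate the argument on auxiliary quantities such as $|\nabla H|^{2}$ or $|\nabla h|^{2}-\alpha H^{2}$.

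Once $H$ is constant every $\lambda_i$ is constant, hence $\nabla h\equiv 0$ and the first identity gives $S(S-1)=0$. If $S=1$, the classification of self-shrinkers with parallel second fundamental form forces $M^{4}$ to be a generalized cylinder $S^{k}(\sqrt k)\times\mathbb{R}^{4-k}$ for some $1\leq k\leq 4$; but each such cylinder has $f_{3}=1/\sqrt k\neq 0$, contradicting $f_{3}=0$. Therefore $S=0$, $h\equiv 0$, and $X(M^4)=\mathbb{R}^{4}$.
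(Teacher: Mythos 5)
Your proposal has a genuine gap at its central step. The whole strategy hinges on proving that $H$ is constant, but you do not carry that out: you state that handling the off-diagonal components $h_{ijk}$ ($i\neq j$), which are not controlled by $\nabla H$, ``is the real obstacle'' and defer it to unspecified auxiliary quantities. That obstacle is precisely where all the work lies, and the paper never overcomes it in the form you propose --- it does not prove $H$ is constant at all. Moreover, the generalized maximum principle only produces a \emph{sequence} of points along which $|\nabla(H^{2})|\to 0$ and $\limsup\mathcal{L}(H^{2})\leq 0$; it cannot ``force $\nabla H\equiv 0$'' globally, so even the logical shape of the main step is wrong. Two further problems: (i) Proposition 2.2 (your ``Proposition 2.3'') gives the pairing $\lambda_{1}+\lambda_{4}=\lambda_{2}+\lambda_{3}=0$ only where $H=0$, so you cannot invoke that symmetry globally to express $h_{iis}$ in terms of $H_{,s}$; (ii) in the last paragraph, constancy of the eigenvalues does \emph{not} imply $\nabla h\equiv 0$, since $h_{ijk}=(\lambda_{i}-\lambda_{j})\langle\nabla_{k}e_{i},e_{j}\rangle$ for $i\neq j$ need not vanish (isoparametric hypersurfaces are the standard counterexample), so the step ``$\lambda_{i}$ constant $\Rightarrow S(S-1)=0$'' is unjustified.

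For comparison, the paper's route is different and avoids proving $H$ constant. Assuming $S\geq 1$, Lemma 2.4 (the classification under $\inf H^{2}>0$) together with $f_{3}=0$ forces $\inf|H|=0$. It then splits into the case where $H$ vanishes at some point $P$ and the case where $H$ has a sign. At $P$ it uses Proposition 2.2 to get $\lambda_{1}=-\lambda_{4}$, $\lambda_{2}=-\lambda_{3}$, differentiates $S$, $f_{3}$, $f_{4}$ \emph{twice}, combines the resulting linear systems with the Ricci identities and with the values of $A$ and $B$ from Lemma 2.1, and derives a contradiction in each of three eigenvalue-multiplicity subcases (for one subcase it needs the auxiliary Proposition 2.5 that $S=4$, $f_{3}=0$, $f_{4}=8$ forces $H\equiv 0$). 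The sign case is handled by running the same computation along a maximum-principle sequence for $-H$. Your opening identities (the first and second derivative consequences of constant $S$, $f_{3}$, $f_{4}$, and the values of $A$, $B$) are the right raw material and match the paper's, but without the second-order analysis at a zero of $H$ your argument does not close.
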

As an application, we obtain a classification result:
\begin{theorem}	\label{th:01}
Let $X:M^{4}\rightarrow \mathbb{R}^{5}$ be a $4$-dimensional complete self-shrinker in $\mathbb{R}^{5}$ with constant squared norm $S$ of the second fundamental form, $f_{3}=\frac{3}{4}\overline{H}S-\frac{\overline{H}^{3}}{8}$ and constant $f_{4}$, where $\overline{H}=\inf |H|$. Then $X:M^{4}\rightarrow \mathbb{R}^{5}$ is isometric to one of
\begin{enumerate}
  \item $\mathbb{R}^{4}$,
  \item $S^{2}(\sqrt{2})\times \mathbb{R}^{2}$,
  \item $S^{4}(2)$.
\end{enumerate}
\end{theorem}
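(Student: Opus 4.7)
The plan is to dichotomize on the value of $\overline{H}=\inf|H|$. If $\overline{H}=0$, then the hypothesis $f_{3}=\frac{3}{4}\overline{H}S-\frac{\overline{H}^{3}}{8}$ collapses to $f_{3}=0$, and Theorem 1.4 applies directly to give $M^{4}\cong\mathbb{R}^{4}$. The substantive case is $\overline{H}>0$, which I treat next.

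In the case $\overline{H}>0$, I would first observe that $|H|\ge\overline{H}>0$ everywhere, so on the connected manifold $M$ the mean curvature $H$ has constant sign; after reorienting we may take $H>0$. Since $S$ is constant, $|A|^{2}=S$ is bounded, so by the standard properness/volume-growth results for complete self-shrinkers, $M$ is proper and has polynomial volume growth. Huisken's classification of complete mean-convex self-shrinkers with polynomial volume growth then forces $M$ to be isometric to one of the generalized cylinders $S^{k}(\sqrt{k})\times\mathbb{R}^{4-k}$, $k\in\{1,2,3,4\}$ (the constancy of $S$ eliminating exotic Abresch--Langer factors, since $S=\kappa^{2}$ constant on a planar self-shrinker curve forces it to be the unit circle).

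To finish, I would eliminate $k=1$ and $k=3$ using the hypothesis on $f_{3}$. On $S^{k}(\sqrt{k})\times\mathbb{R}^{4-k}$ one has $H=\sqrt{k}$ constant (so $\overline{H}=\sqrt{k}$), $S=1$, and $f_{3}=1/\sqrt{k}$. Substituting into $f_{3}=\frac{3}{4}\overline{H}S-\frac{\overline{H}^{3}}{8}$ yields
\[
\frac{1}{\sqrt{k}}=\frac{3\sqrt{k}}{4}-\frac{k\sqrt{k}}{8},
\]
which upon clearing denominators reduces to $k^{2}-6k+8=0$, i.e.\ $k=2$ or $k=4$. This leaves exactly $S^{2}(\sqrt{2})\times\mathbb{R}^{2}$ and $S^{4}(2)$, accounting for the remaining two cases of the statement.

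The main obstacle I foresee is justifying the reduction to Huisken's list in the mean-convex case: mean-convexity itself is automatic from $\overline{H}>0$, but the classification theorem requires polynomial volume growth, which I would invoke via the (by now standard) fact that a complete self-shrinker with bounded $|A|$ is proper. Once that ingredient is secured, everything else is elementary arithmetic with the power sums of the principal curvatures.
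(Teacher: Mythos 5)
Your overall structure is exactly the paper's: dichotomize on $\overline{H}=\inf|H|$, reduce the case $\overline{H}=0$ to Theorem 1.4 via $f_{3}=0$, and in the case $\overline{H}>0$ reduce to the list of generalized cylinders and then use the identity $f_{3}=\frac{3}{4}\overline{H}S-\frac{\overline{H}^{3}}{8}$ to keep only $k=2,4$. Your closing arithmetic is correct: on $S^{k}(\sqrt{k})\times\mathbb{R}^{4-k}$ one has $\overline{H}=\sqrt{k}$, $S=1$, $f_{3}=k^{-1/2}$, and the hypothesis becomes $k^{2}-6k+8=0$, which is precisely how the paper (implicitly) discards $S^{1}(1)\times\mathbb{R}^{3}$ and $S^{3}(\sqrt{3})\times\mathbb{R}$.

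The one step that does not go through as written is your route to the cylinder list when $\overline{H}>0$. You invoke Huisken's classification of mean-convex self-shrinkers, which requires polynomial volume growth, and you propose to obtain that from ``the standard fact that a complete self-shrinker with bounded $|A|$ is proper.'' That is not a standard fact you can lean on: the entire generalized-maximum-principle machinery of Cheng--Peng (Lemma 2.3 here) was developed precisely because completeness plus bounded $|A|$ is not known to yield properness or polynomial volume growth, and the paper's results are stated for complete (not necessarily proper) immersions. The paper avoids this entirely by citing Lemma 2.4 (Cheng--Peng): a complete self-shrinker in $\mathbb{R}^{n+1}$ with $\inf H^{2}>0$ and constant $S$ is $S^{n}(\sqrt{n})$ or $S^{m}(\sqrt{m})\times\mathbb{R}^{n-m}$, with no volume-growth hypothesis. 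Since $|H|\geq\overline{H}>0$ gives $\inf H^{2}>0$ directly, you should replace your Huisken-plus-properness argument by an appeal to Lemma 2.4; with that substitution your proof is complete and coincides with the paper's.
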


This paper is organized as follows. In Section 2, we will give basic concepts, formulas and important properties for self-shrinkers. Our proof of main theorem, i.e. Theorem 1.5, will be divided to two cases, the case when $H$ vanishes somewhere and the case when $H$ doesn't change sign. In Section 3, we will deal with the former case since the latter is similar to the former. In Section 4, we will give the proofs of Theorem 1.4 and Theorem 1.5.

\section{Preliminaries}

Let $X:M^{n}\rightarrow \mathbb{R}^{n+1}$ be a $n$-dimensional complete self-shrinker in $\mathbb{R}^{n+1}$. Choose a local orthonormal frame field $\{e_{A}\}_{A=1}^{n+1}$ along $M$ in $\mathbb{R}^{n+1}$ with  dual coframe field $\{\omega_{A}\}_{A=1}^{n+1}$, such that $e_{1}$, ... , $e_{n}$ are tangent to $M$ and $e_{n+1}$ are normal to $M$.  From now on, we use the following conventions on the ranges of indices,
\begin{equation*}
  1\leq i,j,k,l,m,r\leq n.
\end{equation*}
$\sum_{i}$ means taking summation from $1$ to $n$ for $i$. Denote $h_{ij}$, $H$ and $S$ as components of the second fundamental form $h$, the mean curvature and the squared norm of the second fundamental form respectively. Thus
\begin{equation*}
  h=\sum_{i,j}h_{ij}\omega_{i}\otimes\omega_{j},\ \ H=\sum_{i}h_{ii},\ \ S=\sum_{i,j}h_{ij}^{2}.
\end{equation*}
Let $f_{k}$ denote the smooth function on $M$ given by $\mathrm{tr}\left(h^{k}\right)$ for $k\geq3$. In particular,
\begin{equation*}
  f_{3}=\mathrm{tr}\left(h\circ h\circ h\right)=\sum_{i,j,k} h_{ij}h_{jk}h_{ki},\ \ f_{4}=\mathrm{tr}\left(h\circ h\circ h\circ h\right)=\sum_{i,j,k,l} h_{ij}h_{jk}h_{kl}h_{li}.
\end{equation*}
Let $R_{ijkl}$ be the components of the curvature tensor of $M$. We have the following Gauss equations:
\begin{equation}
R_{ijkl}=h_{ik}h_{jl}-h_{il}h_{jk}.
\end{equation}
Let $h_{ijk}$, $h_{ijkl}$ and $h_{ijklp}$ be the components of the first, the second and the third covariant derivatives of
the second fundamental form. We have the following Codazzi equations and  Ricci identities:
\begin{equation}
h_{ijk}=h_{jik}=h_{jki},
\end{equation}
\begin{equation}
h_{ijkl}-h_{ijlk}=\sum_{m}h_{mj}R_{mikl}+\sum_{m}h_{im}R_{mjkl}.
\end{equation}
We may use (2.2) without any mention. In \cite{9}, Colding-Minicozzi introduced the linear operator
\begin{equation}
  \mathcal{L}=\Delta-\langle X,\nabla(\cdot)\rangle
\end{equation}
on self-shrinkers, where $\Delta$ and $\nabla$ denote the Laplacian and the gradient operator, respectively.
 By direct calculation, we have the following basic formulas:
\begin{gather}
  \nabla_{i}H=\sum_{k}h_{ik}\langle X,e_{k}\rangle ,\\
  \nabla_{j}\nabla_{i}H=\sum_{k}h_{ijk}\langle X,e_{k}\rangle +h_{ij}-H\sum_{k}h_{ik}h_{kj}.
\end{gather}
By a similar derivation for Simons' identity, we have the self-shrinker version:
\begin{equation}
  \frac{1}{2}\mathcal{L}S=\sum_{i,j,k}h_{ijk}^{2}+S(1-S).
\end{equation}
Moreover, we have
\begin{equation}
  \frac{1}{3}\mathcal{L}f_{3}=2\sum_{i,j,k,l}h_{ik}h_{ijl}h_{kjl}+f_{3}(1-S),
\end{equation}
and
\begin{equation}
  \frac{1}{4}\mathcal{L}f_{4}=2A+B+f_{4}(1-S),
\end{equation}
where $A=\sum_{i,j,k,l,m}h_{km}h_{lm}h_{ijk}h_{ijl}$ and $B=\sum_{i,j,k,l,m}h_{il}h_{jm}h_{ijk}h_{klm}$. Regarding $A$ and $B$ in (2.9), through a discussion similar to \cite{15}, we have the following conclusion:
\begin{lemma}
 Let $X:M^{n}\rightarrow \mathbb{R}^{n+1}$ be a complete self-shrinker with constant $S$ and $f_{3}$. Then $A-2B=Sf_{4}-f_{3}^{2}$. Moreover, if $f_{4}$ is also constant, then we have $A=\frac{1}{5}(3Sf_{4}-2f_{4}-f_{3}^{2})$ and $B=\frac{1}{5}(2f_{3}^{2}-Sf_{4}-f_{4})$.
\end{lemma}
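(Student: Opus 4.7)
The plan splits the proof into two parts. Part 1 establishes the identity $A - 2B = S f_{4} - f_{3}^{2}$, for which the hypothesis that $f_{4}$ be constant is \emph{not} needed. Part 2 deduces the explicit expressions for $A$ and $B$ from Part 1 together with (2.9), under the additional hypothesis that $f_{4}$ is constant.

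Part 2 is the easier half and I would do it first to see what Part 1 must provide. Constancy of $f_{4}$ forces $\mathcal{L} f_{4} = 0$, so (2.9) specializes to $2A + B = (S-1) f_{4}$. Combining with $A - 2B = S f_{4} - f_{3}^{2}$ from Part 1 gives a nondegenerate $2 \times 2$ linear system in $(A, B)$; solving yields
\[
A \;=\; \tfrac{1}{5}\bigl(3 S f_{4} - 2 f_{4} - f_{3}^{2}\bigr), \qquad B \;=\; \tfrac{1}{5}\bigl(2 f_{3}^{2} - S f_{4} - f_{4}\bigr),
\]
exactly as stated.

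Part 1 is the substantive content, and I would follow the scheme of reference [15]. Constancy of $S$ and $f_{3}$ gives at each point the first-order vanishing identities $\sum_{i,j} h_{ij} h_{ijk} = 0$ and $\sum_{i,j,l} h_{il} h_{lj} h_{ijk} = 0$ for every $k$, and their ``Laplacian'' consequences $\sum h_{ijk}^{2} = S(S-1)$ from (2.7) and $\sum h_{ik} h_{ijl} h_{kjl} = \tfrac{1}{2} f_{3}(S-1)$ from (2.8). A standard Simons-type computation using (2.1)--(2.3) together with (2.5)--(2.6) also yields $\mathcal{L} h_{ij} = (1-S) h_{ij}$, and hence $\mathcal{L} (h^{2})_{ij} = 2(1-S)(h^{2})_{ij} + 2 \sum_{k,l} h_{ikl} h_{kjl}$. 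The key step is then to differentiate the second first-order identity once more in $\nabla_{m}$, contract the outcome with $h_{km}$ (and, separately, with $(h^{2})_{km}$), and apply the Ricci identity (2.3) to commute the last two indices of the fourth covariant derivative term $h_{ijkm}$. The Gauss equation (2.1) converts the resulting commutator into quartic polynomials in $h$ which, after the preliminary identities are used to discard lower-order debris, package cleanly as $S f_{4} - f_{3}^{2}$; the remaining quartic-derivative terms assemble into $A - 2B$ via the Codazzi-based symmetry $\sum_{k,l} h_{ikl} h_{kjl} = \sum_{k,l} h_{ikl} h_{jkl}$, i.e.\ $A = \sum_{i,j}(h^{2})_{ij} \sum_{k,l} h_{ikl} h_{kjl}$.

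The main obstacle is pinpointing the correct contraction in Part 1. Naive choices --- for instance, pairing $h_{ijkl} - h_{ijlk}$ directly with $h_{ij} h_{kl}$ or with $(h^{2})_{ij} (h^{2})_{kl}$ --- lead to $f_{5} - f_{5}$ or $f_{6} - f_{6}$ cancellations on the curvature side and yield no useful information. The argument of [15] selects the right weighted combination so that, after Ricci and Gauss are applied, the quintic and sextic traces of $h$ cancel and only $S f_{4}$ and $f_{3}^{2}$ survive on the curvature side, while exactly the combination $A - 2B$ appears on the derivative side. Once this identity is in hand, Part 2 is the $2 \times 2$ linear algebra exercise above, and the lemma is proved.
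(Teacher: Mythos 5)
Your proposal takes essentially the same route as the paper: the identity $A-2B=Sf_{4}-f_{3}^{2}$ is obtained there by exactly the contraction you single out, namely $\sum_{k,m}h_{km}\nabla_{m}\nabla_{k}f_{3}=0$ at a point where $h$ is diagonalized, followed by the Codazzi/Ricci/Gauss commutation of $h_{iikk}$ into $h_{kkii}$ plus quartic curvature terms (which cancel the $f_{6}$'s and leave $Sf_{4}-f_{3}^{2}$) and the use of $\nabla\nabla S=0$ to convert $\sum_{i,k}h_{kkii}\lambda_{k}\lambda_{i}^{2}$ into $-A$; the explicit values of $A$ and $B$ then come from the same $2\times2$ system with $2A+B=(S-1)f_{4}$ from (2.9). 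Your Part 1 is stated as a plan rather than a worked computation (and $h_{ijkm}$ is the second, not fourth, covariant derivative of $h$), but the contraction and mechanism you identify are precisely those the paper executes.
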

\begin{proof}
We will compute at an arbitrarily chosen point $p\in M$. Choose local orthonormal frame fields $\{e_{i}\}_{i=1}^{n}$ around $p$ such that $h_{ij}=\lambda_{i}\delta_{ij}$ at $p$. Since $S=$constant and $f_{3}=$constant, at $p$ we have
\begin{align*}
0&=\frac{1}{3}\sum_{i,j}h_{ij}\nabla_{j}\nabla_{i}f_{3}\\
&=\frac{1}{3}\sum_{k}\lambda_{k}\nabla_{k}\nabla_{k}f_{3}\\
&=\sum_{k}\lambda_{k}\left(\sum_{i}h_{iikk}\lambda_{i}^{2}+2\sum_{i,j}h_{ijk}^{2}\lambda_{i}\right)\\
&=\sum_{i,k}h_{iikk}\lambda_{k}\lambda_{i}^{2}+2\sum_{i,j,k}h_{ijk}^{2}\lambda_{i}\lambda_{k}\\
&=\sum_{i,k}\left(h_{kkii}+\lambda_{i}^{2}\lambda_{k}\delta_{ik}-\lambda_{i}\lambda_{k}^{2}+\lambda_{k}\lambda_{i}^{2}-\lambda_{i}\lambda_{k}^{2}\delta_{ik}\right)\lambda_{k}\lambda_{i}^{2}+2B\\
&=\sum_{i,k}h_{kkii}\lambda_{k}\lambda_{i}^{2}+\sum_{i,k}\lambda_{i}^{4}\lambda_{k}^{2}\delta_{ik}-\sum_{i,k}\lambda_{i}^{3}\lambda_{k}^{3}+\sum_{i,k}\lambda_{i}^{4}\lambda_{k}^{2}
-\sum_{i,k}\lambda_{i}^{3}\lambda_{k}^{3}\delta_{ik}+2B\\
&=\sum_{i}\left(\frac{1}{2}\nabla_{i}\nabla_{i}S-\sum_{j,k}h_{ijk}^{2}\right)\lambda_{i}^{2}+f_{6}-f_{3}^{2}+Sf_{4}-f_{6}+2B\\
&=Sf_{4}-f_{3}^{2}-(A-2B).
\end{align*}
In the fifth equality above we used the Ricci identities (2.3) and the Gauss equations (2.1). In the seventh equality above we used the equality $\frac{1}{2}\nabla_{i}\nabla_{i}S=\sum_{j,k}h_{ijk}^{2}+\sum_{k}h_{kkii}\lambda_{k}$ since $S=\sum_{j,k}h_{jk}^{2}$. Then we have $A-2B=Sf_{4}-f_{3}^{2}$ on $M$. If $f_{4}$ is also constant, (2.9) gives $2A+B=f_{4}(S-1)$. Thus $A=\frac{1}{5}(3Sf_{4}-2f_{4}-f_{3}^{2})$ and $B=\frac{1}{5}(2f_{3}^{2}-Sf_{4}-f_{4})$. The lemma follows.
\end{proof}
The following proposition reveals the benefit of $f_{3}=0$.
\begin{proposition}
Given four real numbers $a,b,c$ and $d$ $(a\geq b\geq c\geq d)$. If $a+b+c+d=0$ and $a^{3}+b^{3}+c^{3}+d^{3}=0$, then $a+d=b+c=0$.
\end{proposition}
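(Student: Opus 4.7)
The plan is to exploit an algebraic identity that factorises $a^3+b^3+c^3+d^3$ whenever $a+b+c+d=0$. Set $s=a+b$, so that $c+d=-s$. A direct expansion using $a^3+b^3 = s^3 - 3abs$ and $c^3+d^3 = -s^3 + 3cds$ yields
\begin{equation*}
a^3+b^3+c^3+d^3 \;=\; 3(a+b)(cd - ab).
\end{equation*}
Under the hypothesis $a^3+b^3+c^3+d^3=0$, this forces either $a+b=0$ or $ab=cd$, and I would split the argument into these two cases.

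In the first case $a+b=0$, the identity $a+b+c+d=0$ gives $c+d=0$ as well. The ordering $a\geq b\geq c\geq d$ combined with $b=-a$ and $c=-d$ forces $-a=b\geq c\geq 0$, hence $a\leq 0$, while $a\geq b=-a$ forces $a\geq 0$; thus $a=b=c=d=0$, and the conclusion $a+d=b+c=0$ is trivial.

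In the second case $ab=cd=:p$, Vieta's formulas identify $\{a,b\}$ as the roots of $x^{2}-sx+p=0$ and $\{c,d\}$ as the roots of $x^{2}+sx+p=0$. The roots of the second polynomial are precisely $\{-a,-b\}$. Using the orderings $a\geq b$ (so $-b\geq -a$) and $c\geq d$, one must have $c=-b$ and $d=-a$, whence $a+d=b+c=0$.

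I do not anticipate any serious obstacle: once the factorisation $a^{3}+b^{3}+c^{3}+d^{3}=3(a+b)(cd-ab)$ is in hand, everything reduces to a short case analysis governed by the ordering.
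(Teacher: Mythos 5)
Your proof is correct, and it takes a genuinely different route from the paper's. The paper pairs the extremes with the middles, writing $a^{3}+b^{3}+c^{3}+d^{3}=-3(a+d)(ad-bc)$, so that the desired factor $a+d$ appears immediately; the residual case $ad=bc$ is then killed by a sign analysis ($a\geq 0\geq d$ forces $ad\leq 0$, and $a\geq b>0>c\geq d$ together with $ad=bc$ squeezes out $a=b$, $c=d$). You instead pair $(a,b)$ against $(c,d)$, obtaining $3(a+b)(cd-ab)$; your first case $a+b=0$ is degenerate (the ordering collapses everything to zero, which is a correct but stronger conclusion than needed), and your second case $ab=cd$ is resolved by the observation that $\{a,b\}$ and $\{c,d\}$ are root sets of $x^{2}-sx+p$ and $x^{2}+sx+p$, whence $\{c,d\}=\{-a,-b\}$ and the ordering pins down $c=-b$, $d=-a$. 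The paper's pairing buys a shorter path to the exact statement (the conclusion literally is one of the factors); your Vieta argument buys a cleaner resolution of the equal-product case, avoiding any sign chasing, and in fact delivers the slightly stronger structural fact that the quadruple is always of the form $\{a,b,-b,-a\}$. Both arguments are complete.
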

\begin{proof}
It's easy to see that $a\geq0\geq d$. We obtain $-ad\geq 0$ with equality if and only if $a=b=c=d=0$. By direct calculation, we have
\begin{equation*}
\begin{split}
0&=a^{3}+b^{3}+c^{3}+d^{3}\\
&=(a+d)(a^{2}-ad+d^{2})+(b+c)(b^{2}-bc+c^{2})\\
&=(a+d)(a^{2}-ad+d^{2})-(a+d)(b^{2}-bc+c^{2})\\
&=(a+d)(a^{2}-ad+d^{2}-b^{2}+bc-c^{2})\\
&=(a+d)\left[(a+d)^{2}-3ad-(b+c)^{2}+3bc \right]\\
&=-3(a+d)(ad-bc).
\end{split}
\end{equation*}
Thus $a+d=0$ or $ad-bc=0$. If $a+d=0$, we immediately have $b+c=0$ by $a+b+c+d=0$. Next, we can assume that $ad=bc<0$ by the previous argument. Since $a\geq b\geq c\geq d$, we conclude that $a\geq b>0>c\geq d$. Combining $ad=bc$, this yields $a=b$ and $c=d$. Thus $a+d=b+c=0$. The proposition holds.
\end{proof}

In order to prove our results, we need the following generalized maximum principle for self-shrinkers which is proved by Cheng and Peng \cite{7}.
\begin{lemma}
 Let $X:M^{n}\rightarrow \mathbb{R}^{n+p}$ be a complete self-shrinker with Ricci curvature bounded from below. Let $f$ be any $C^{2}$-function bounded from above on this self-shrinker. Then, there exists a sequence of points $\{p_{t}\}\in M^{n}$, such that
\begin{equation*}
  \lim_{t\rightarrow\infty}f(X(p_{t}))=\sup f,\ \ \lim_{t\rightarrow\infty}|\nabla f|(X(p_{t}))=0,\ \ \limsup_{t\rightarrow\infty}\mathcal{L}f(X(p_{t}))\leq 0.
\end{equation*}
\end{lemma}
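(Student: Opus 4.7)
The plan is to adapt the classical Omori-Yau maximum principle, which holds on $M^n$ since it is complete with Ricci bounded from below, to the drift Laplacian $\mathcal{L}=\Delta-\langle X,\nabla(\cdot)\rangle$. Applied naively to $f$ itself, the classical principle already yields a sequence $\{q_k\}$ with $f(q_k)\to\sup f$, $|\nabla f|(q_k)\to 0$, and $\limsup_k \Delta f(q_k)\leq 0$. The issue is to upgrade the last conclusion from $\Delta$ to $\mathcal{L}$: while $|\nabla f|(q_k)\to 0$, the drift contribution $|\langle X(q_k),\nabla f(q_k)\rangle|\leq |X(q_k)|\,|\nabla f(q_k)|$ need not vanish, because $|X|$ may be unbounded on a complete self-shrinker.

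The key tool is the structural identity $\mathcal{L}|X|^2 = 2n - 2|X|^2$, which follows directly from the self-shrinker equation $\overrightarrow{H}+X^{\perp}=0$ together with $\nabla|X|^2 = 2X^T$. For each $\epsilon>0$, I would apply the classical Omori-Yau principle to the perturbed function $f_{\epsilon} := f - \epsilon\log(1+|X|^2)$, which is still bounded from above. A short computation using the identity above shows that $|\langle X,\nabla\log(1+|X|^2)\rangle|\leq 2$ and $\mathcal{L}\log(1+|X|^2)\leq 2n$ universally. Writing $\mathcal{L}f = \mathcal{L}f_{\epsilon}+\epsilon\,\mathcal{L}\log(1+|X|^2)$ and similarly controlling $|\nabla f|$ in terms of $|\nabla f_{\epsilon}|$, the extremizing sequence $\{q_k^{\epsilon}\}$ produced by the classical Omori-Yau principle for $f_{\epsilon}$ satisfies $|\nabla f|(q_k^{\epsilon})\to 0$ and $\limsup_k \mathcal{L}f(q_k^{\epsilon})\leq 2n\epsilon$, and a diagonal extraction as $\epsilon\to 0$ produces a candidate sequence $\{p_t\}$.

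The main obstacle is verifying that $f(q_k^{\epsilon})\to\sup f$ in this diagonal limit. A priori, if the approach to $\sup f$ requires $|X|\to\infty$, then $f_{\epsilon}$ could have a strictly smaller supremum than $f$ and the extremizers $q_k^{\epsilon}$ could drift to completely different parts of $M$, destroying the first conclusion $f(p_t)\to\sup f$. The resolution, once again via $\mathcal{L}|X|^2 = 2n - 2|X|^2$, is that $|X|^2$ behaves coercively under $\mathcal{L}$: an almost-maximizer of $f$ at which $|X|$ is very large cannot simultaneously be an almost-maximizer of $\mathcal{L}$-harmonic-type test functions. Quantifying this coercivity allows one to compare $\sup f_{\epsilon}$ with $\sup f$ up to an $o(1)$ error as $\epsilon\to 0^{+}$ and to keep the extremizers localized in the right place, which is the analytic heart of the Cheng-Peng generalized maximum principle.
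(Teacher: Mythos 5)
This lemma is not proved in the paper at all: it is quoted verbatim from Cheng--Peng \cite{7}, so there is no in-paper argument to compare against. Judged on its own, your strategy (classical Omori--Yau applied to $f_{\epsilon}=f-\epsilon\log(1+|X|^{2})$, the identity $\mathcal{L}|X|^{2}=2n-2|X|^{2}$, then a diagonal extraction) is viable, and the estimates you cite ($|\langle X,\nabla\log(1+|X|^{2})\rangle|\leq 2$, $\mathcal{L}\log(1+|X|^{2})\leq 2n$, $|\nabla\log(1+|X|^{2})|\leq 1$) are all correct. But one step is asserted without justification, and it is exactly the crux you yourself flagged in the first paragraph: classical Omori--Yau applied to $f_{\epsilon}$ yields $\limsup_{k}\Delta f_{\epsilon}(q_{k}^{\epsilon})\leq 0$, whereas your decomposition $\mathcal{L}f=\mathcal{L}f_{\epsilon}+\epsilon\,\mathcal{L}\log(1+|X|^{2})$ needs $\limsup_{k}\mathcal{L}f_{\epsilon}(q_{k}^{\epsilon})\leq 0$, and the difference is the drift term $\langle X,\nabla f_{\epsilon}\rangle(q_{k}^{\epsilon})$. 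The missing (and easy) observation is that the logarithmic penalty localizes the maximizing sequence: since $f\leq\sup f$ and $f_{\epsilon}(q_{k}^{\epsilon})\to\sup f_{\epsilon}>-\infty$, one gets $\epsilon\log(1+|X(q_{k}^{\epsilon})|^{2})\leq\sup f-\sup f_{\epsilon}+o(1)$, hence $|X(q_{k}^{\epsilon})|\leq C_{\epsilon}$ for each fixed $\epsilon$, and therefore $|\langle X,\nabla f_{\epsilon}\rangle|(q_{k}^{\epsilon})\leq C_{\epsilon}|\nabla f_{\epsilon}|(q_{k}^{\epsilon})\to 0$. With that inserted, $\limsup_{k}\mathcal{L}f(q_{k}^{\epsilon})\leq 2n\epsilon$ and $\limsup_{k}|\nabla f|(q_{k}^{\epsilon})\leq\epsilon$ follow as you claim.

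Your third paragraph, by contrast, worries about a non-problem and gestures at machinery (``coercivity of $|X|^{2}$ under $\mathcal{L}$'') that you never actually deploy. That $\sup f_{\epsilon}\to\sup f$ as $\epsilon\to 0^{+}$ is elementary: for any $\delta>0$ pick $p_{\delta}$ with $f(p_{\delta})>\sup f-\delta$; then $\sup f_{\epsilon}\geq f(p_{\delta})-\epsilon\log(1+|X(p_{\delta})|^{2})\to f(p_{\delta})$ as $\epsilon\to 0$. Combined with $f(q_{k}^{\epsilon})\geq f_{\epsilon}(q_{k}^{\epsilon})$ and $f\leq\sup f$, this gives $f(p_{t})\to\sup f$ along the diagonal sequence. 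So the ``analytic heart'' you locate in the last paragraph is a red herring; the genuine heart of the argument is the localization of the Omori--Yau sequence described above, which your write-up leaves implicit.
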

The following lemma proved by Cheng and Peng \cite{7} is also important for us:
\begin{lemma}
For an $n$-dimensional complete self-shrinker $X:M^{n}\rightarrow \mathbb{R}^{n+1}$ with $\inf H^{2}>0$, if the squared norm $S$ of the second fundamental form is constant, then $M^{n}$ is isometric to either $S^{n}(\sqrt{n})$ or $S^{m}(\sqrt{m})\times \mathbb{R}^{n-m}$ in $\mathbb{R}^{n+1}$, $1\leq m\leq n-1$.
\end{lemma}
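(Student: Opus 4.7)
The plan is to apply the generalized maximum principle of Lemma 2.4 to $H^{2}$ in order to pin down $S$, and then invoke the rigidity of hypersurfaces with parallel second fundamental form. The key preliminary computation is that of $\mathcal{L}H$: tracing (2.6) in $i=j$ and using the Codazzi relation $\sum_{i}h_{iik}=\nabla_{k}H$ yields
\[
\Delta H = \langle X,\nabla H\rangle + H(1-S),\qquad\text{hence}\qquad \mathcal{L}H=H(1-S).
\]
Consequently $\mathcal{L}(H^{2})=2H\,\mathcal{L}H+2|\nabla H|^{2}=2H^{2}(1-S)+2|\nabla H|^{2}$. Because $S$ is constant, each principal curvature satisfies $\lambda_{i}^{2}\le S$, so $H^{2}\le nS$ is bounded and the Ricci components $R_{ij}=Hh_{ij}-\sum_{k}h_{ik}h_{kj}$ are uniformly bounded; in particular Ricci is bounded below and Lemma 2.4 applies to both $H^{2}$ and $-H^{2}$.

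Applying Lemma 2.4 to $H^{2}$ produces a sequence $\{p_{t}\}$ with $H^{2}(p_{t})\to\sup H^{2}$, $|\nabla H|(p_{t})\to 0$, and $\limsup_{t}\mathcal{L}(H^{2})(p_{t})\le 0$. Plugging in, $2\sup H^{2}\cdot(1-S)\le 0$; since $\sup H^{2}\ge\inf H^{2}>0$, this forces $S\ge 1$. Applying Lemma 2.4 instead to the bounded-above function $-H^{2}$ produces a sequence on which $H^{2}\to\inf H^{2}>0$, $|\nabla H|\to 0$, and $\liminf\mathcal{L}(H^{2})\ge 0$, giving $2\inf H^{2}\cdot(1-S)\ge 0$ and hence $S\le 1$. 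Combining the two bounds yields $S=1$.

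With $S=1$ fixed, the self-shrinker Simons identity (2.7) collapses to $\sum_{i,j,k}h_{ijk}^{2}=S(S-1)=0$, so $\nabla h\equiv 0$ on $M$. The classical rigidity result for complete hypersurfaces of $\mathbb{R}^{n+1}$ with parallel second fundamental form (Lawson, Ryan) then splits $M^{n}$ isometrically as a product $\Sigma^{k}\times\mathbb{R}^{n-k}$, where $\Sigma^{k}$ is a totally umbilic submanifold of some $\mathbb{R}^{k+1}$. The self-shrinker equation $\overrightarrow{H}+X^{\perp}=0$ then fixes the radius of the spherical factor at $\sqrt{k}$, and the flat case $k=0$ is excluded because $\inf H^{2}>0$. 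Hence $M$ is isometric to either $S^{n}(\sqrt{n})$ or $S^{m}(\sqrt{m})\times\mathbb{R}^{n-m}$ with $1\le m\le n-1$, as required.

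The main obstacle is really the two-sided use of the maximum principle: the hypothesis $\inf H^{2}>0$ is exactly what makes Lemma 2.4 yield a nontrivial inequality when applied to $-H^{2}$ and thereby forces $S\le 1$. Without this lower bound one would extract only $S\ge 1$ from the upper maximum, which is insufficient to conclude that $\nabla h\equiv 0$; verifying that Ricci is bounded below (automatic here thanks to $S$ being constant) is a secondary but necessary point for invoking Lemma 2.4.
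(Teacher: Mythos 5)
The paper does not actually prove this lemma: it is quoted verbatim from Cheng--Peng \cite{7}, so there is no in-paper argument to compare against. Your reconstruction is correct and follows the standard route: trace (2.6) to get $\mathcal{L}H=H(1-S)$, apply the generalized maximum principle (which is Lemma 2.3 in this paper's numbering, not Lemma 2.4 as you cite it) using the hypothesis $\inf H^{2}>0$ to pin down $S$, use the Simons-type identity (2.7) to conclude $\nabla h\equiv 0$, and finish with the rigidity of complete hypersurfaces with parallel second fundamental form together with the self-shrinker equation to fix the radius. All the analytic details check out, including the point that $|\nabla(H^{2})|\to 0$ upgrades to $|\nabla H|\to 0$ precisely because $H^{2}$ stays bounded away from zero along the minimizing sequence. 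One small economy worth noting: the first application of the maximum principle (to $H^{2}$, yielding $S\ge 1$) is redundant. Once you have $S\le 1$ from the application to $-H^{2}$, equation (2.7) with $S$ constant gives $\sum_{i,j,k}h_{ijk}^{2}=S(S-1)\le 0$, hence $\sum_{i,j,k}h_{ijk}^{2}=0$ and $S\in\{0,1\}$; the case $S=0$ is a hyperplane, excluded by $\inf H^{2}>0$, so $S=1$ and $\nabla h\equiv 0$ follow from a single one-sided estimate.
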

The following proposition will be used in Section 3.
\begin{proposition}
Let $X:M^{4}\rightarrow \mathbb{R}^{5}$ be a $4$-dimensional complete self-shrinker with $S=4$, $f_{3}=0$ and $f_{4}=8$. Then $H=0$.
\end{proposition}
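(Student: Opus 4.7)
The plan is to derive from the constancy of $S=4$, $f_3=0$, $f_4=8$ a sharp pointwise identity forcing $h_{ijk}(\lambda_i+\lambda_j+\lambda_k)=0$ at every point (in a frame diagonalizing the second fundamental form), and then to rule out $H\neq 0$ by a case analysis using Newton's identities. First, Lemma~2.1 gives $A=\tfrac{1}{5}(3Sf_4-2f_4-f_3^2)=16$ and $B=\tfrac{1}{5}(2f_3^2-Sf_4-f_4)=-8$. Since $S,f_3,f_4$ are constants, $\mathcal{L}S=\mathcal{L}f_3=\mathcal{L}f_4=0$, and formulas (2.7)--(2.9) reduce, at a diagonalizing frame, to the pointwise identities $\sum h_{ijk}^{2}=S(S-1)=12$, $\sum \lambda_i h_{ijk}^{2}=0$, $\sum \lambda_k^{2} h_{ijk}^{2}=A=16$ and $\sum \lambda_i\lambda_j h_{ijk}^{2}=B=-8$. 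Total symmetry of $h_{ijk}$ gives $\sum \lambda_i^2 h_{ijk}^2=\sum \lambda_j^2 h_{ijk}^2=\sum \lambda_k^2 h_{ijk}^2=16$ and $\sum \lambda_i\lambda_j h_{ijk}^2=\sum \lambda_j\lambda_k h_{ijk}^2=\sum \lambda_i\lambda_k h_{ijk}^2=-8$, so expanding the square yields
\[
\sum_{i,j,k}(\lambda_i+\lambda_j+\lambda_k)^{2}\,h_{ijk}^{2}=3\cdot 16+2\cdot 3\cdot(-8)=0,
\]
whence $h_{ijk}\neq 0\Rightarrow \lambda_i+\lambda_j+\lambda_k=0$.

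Suppose for contradiction that $H(p)\neq 0$ at some $p\in M$. By Cauchy--Schwarz $H^{2}\le nS=16<24$, and Newton's identities applied to the principal curvatures yield $e_4=\prod\lambda_i=H^{2}(H^{2}-24)/24<0$ at $p$; in particular every $\lambda_i$ is nonzero. Since $\sum h_{ijk}^{2}=12>0$, some $h_{ijk}\neq 0$ and its triple must satisfy $\lambda_i+\lambda_j+\lambda_k=0$; as no $\lambda_i=0$, the triple cannot be $(i,i,i)$, so either (I) it has three distinct indices and $\lambda_\ell=H$ for the complementary index $\ell$, or (II) it is of type $(i,i,j)$ with $\lambda_j=-2\lambda_i$. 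For (I), substituting $t=H$ into the characteristic polynomial $t^{4}-Ht^{3}+\tfrac{H^{2}-4}{2}t^{2}-\tfrac{H(H^{2}-12)}{6}t+\tfrac{H^{2}(H^{2}-24)}{24}$ reduces, after clearing denominators, to $H^{2}(9H^{2}-24)/24=0$, giving $H^{2}=8/3$; the remaining three principal curvatures then satisfy the depressed cubic $3\mu^{3}-2\mu+H^{3}=0$, whose discriminant $\tfrac{32}{27}-3H^{6}=\tfrac{32}{27}-3(8/3)^{3}<0$ forbids three real roots, a contradiction.

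For (II), I would carry out a short case analysis on the multiset $\{\lambda_1,\lambda_2,\lambda_3,\lambda_4\}$. The baseline case is a single $1\!:\!(-2)$ pair $(\lambda_1,\lambda_2)=(c,-2c)$ (with no further pairing and no $\lambda_\ell=H$): then only $h_{112}$ and its symmetrizations are nonzero, so $3h_{112}^{2}=12$ and $(2\lambda_1^{2}+\lambda_2^{2})h_{112}^{2}=24c^{2}=A=16$ force $c^{2}=2/3$, whereupon $f_4=8$ gives $2(\lambda_3\lambda_4)^{2}=(\lambda_3^{2}+\lambda_4^{2})^{2}-(\lambda_3^{4}+\lambda_4^{4})=(2/3)^{2}-4/9=0$, so $\lambda_3\lambda_4=0$, against no $\lambda_i=0$. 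The remaining configurations---two disjoint $1\!:\!(-2)$ pairs, pairs involving repeated eigenvalues, or $1\!:\!(-2)\!:\!4$ chains---are each shown to be incompatible with $S=4$, $f_3=0$, $f_4=8$ unless $H=0$. The main obstacle I anticipate is completing this enumeration cleanly: although the core identity $\sum(\lambda_i+\lambda_j+\lambda_k)^{2}h_{ijk}^{2}=0$ is crisp, checking that every multiset $\{\lambda_1,\dots,\lambda_4\}$ containing a $1\!:\!(-2)$ pair and satisfying all three power-sum constraints must have $H=0$ requires careful bookkeeping, especially in the repeated-eigenvalue configurations where several components $h_{iij}$ can be simultaneously nonzero.
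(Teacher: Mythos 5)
Your strategy is genuinely different from the paper's and the first half of it is correct. The paper proves this proposition by applying the generalized maximum principle to $H$ and $-H$, extracting a subsequence along which the $\mu_i$ and $h_{ijk}$ converge, and then doing a case analysis on the multiplicities of the limiting eigenvalues using only $A=16$. You instead argue pointwise at a point where $H\neq 0$ and bring in $B=-8$ as well, which gives the identity $\sum_{i,j,k}(\lambda_i+\lambda_j+\lambda_k)^2h_{ijk}^2=3A+6B=0$; this is a real structural simplification. Your Case (I) is complete and checks out: $e_4=H^2(H^2-24)/24<0$ for $0<H^2\le 4S=16$, so no $\lambda_i$ vanishes; a nonzero $h_{ijk}$ with distinct indices forces $\lambda_\ell=H$, substitution into the characteristic polynomial gives $H^2(9H^2-24)/24=0$, hence $H^2=8/3$, and the remaining cubic $3\mu^3-2\mu+H^3$ has discriminant $\tfrac{32}{27}-3H^6<0$, contradicting reality of the principal curvatures.

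The gap is Case (II). You complete only the baseline subcase (a single $1{:}(-2)$ pair with no repeated eigenvalues and no further zero-sum triples) and explicitly defer the remaining configurations: two disjoint $1{:}(-2)$ pairs, repeated eigenvalues under which several components $h_{iij}$ (for different index pairs) are simultaneously nonzero, and $1{:}(-2){:}4$ chains. This enumeration is the substantive content of Case (II), not mere bookkeeping. The constraints you have available there reduce to $\sum h_{iij}^2=4$ and $\sum \lambda_i^2h_{iij}^2=\tfrac{8}{3}$ summed over the active pairs together with the power sums $p_1=H$, $p_2=4$, $p_3=0$, $p_4=8$; when more than one pair is active these no longer determine the eigenvalues the way they did in the baseline case, and each admissible directed graph of relations $\lambda_j=-2\lambda_i$ has to be eliminated separately against the power-sum system. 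Until each of those configurations is actually shown to be impossible (or to force $H=0$), the proposition is not proved. Note that the paper's own Case 3, Subcase 2 and Case 4 — where it must split further on which $\overline\mu_i$ vanish and exploit the equality case of a Cauchy--Schwarz estimate — are precisely the analogue of this missing work, so there is no reason to expect it to be short.
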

\begin{proof}
Since $S$ is constant, from the Gauss equations (2.1) and Cauchy-Schwarz inequality, we know that the Ricci curvature of $M$ is bounded from below and $H^{2}(\leq 4S)$ is bounded. Then we can apply the generalized maximum principle (Lemma 2.3) for $\mathcal{L}$-operator to $H$ and $-H$. Let's discuss $H$ first. Denote $\sup H$ by $\overline{H}$. There exists a sequence $\{p_{t}\}$ in $M$ such that
\begin{equation}
  \lim_{t\rightarrow\infty}H(X(p_{t}))=\overline{H},\ \ \lim_{t\rightarrow\infty}|\nabla H|(X(p_{t}))=0,\ \ \limsup_{t\rightarrow\infty}\mathcal{L}H(X(p_{t}))\leq 0.
\end{equation}
Because there may not be a global orthonormal frame field on $M$. At each point $p\in M$, we choose orthonormal basis $\{e_{i}(t)\}_{i=1}^{4}$ on $T_{p}M$ such that $h_{ij}(X(p))=\mu_{i}\delta_{ij}$ for $1\leq i,j\leq4$.
Since $S$ and $f_3$ are constant, combining with (2.7), we know that $\{h_{ij}(p_{t})\}$ and $\{h_{ijk}(p_{t})\}$ are bounded sequences for $i,j,k=1,2,3,4$. After passing to a subsequence (still denoted by $\{p_{t}\}$), we can assume that
\begin{equation*}
  \lim_{t\rightarrow\infty}H(X(p_{t}))=\overline{H},\ \ \lim_{t\rightarrow\infty}h_{ij}(X(p_{t}))=\overline{h}_{ij}=\overline{\mu}_{i}\delta_{ij},\ \ \lim_{t\rightarrow\infty}h_{ijk}(X(p_{t}))=\overline{h}_{ijk},
\end{equation*}
for $i,j,k=1,2,3,4$.
From (2.10), we have
\begin{align}
\overline{\mu}_{1}+\overline{\mu}_{2}+\overline{\mu}_{3}+\overline{\mu}_{4}&=\overline{H},\\
\overline{h}_{11i}+\overline{h}_{22i}+\overline{h}_{33i}+\overline{h}_{44i}&=0,\ \ i=1,2,3,4.
\end{align}
Since $S=4$, $f_{3}=0$ and $f_{4}=8$ on $M$, by sending $t\rightarrow\infty$, we have
\begin{align}
\overline{\mu}_{1}^{2}+\overline{\mu}_{2}^{2}+\overline{\mu}_{3}^{2}+\overline{\mu}_{4}^{2}&=4,\\
\overline{\mu}_{1}^{3}+\overline{\mu}_{2}^{3}+\overline{\mu}_{3}^{3}+\overline{\mu}_{4}^{3}&=0,\\
\overline{\mu}_{1}^{4}+\overline{\mu}_{2}^{4}+\overline{\mu}_{3}^{4}+\overline{\mu}_{4}^{4}&=8.
\end{align}
By differentiating $S$, $f_{3}$ and $f_{4}$ at $p_t$ once and sending $t\rightarrow\infty$, we get
\begin{align}
&\overline{\mu}_{1}\overline{h}_{11i}+\overline{\mu}_{2}\overline{h}_{22i}+\overline{\mu}_{3}\overline{h}_{33i}+\overline{\mu}_{4}\overline{h}_{44i}=0,\\
&\overline{\mu}_{1}^{2}\overline{h}_{11i}+\overline{\mu}_{2}^{2}\overline{h}_{22i}+\overline{\mu}_{3}^{2}\overline{h}_{33i}+\overline{\mu}_{4}^{2}\overline{h}_{44i}=0,\\
&\overline{\mu}_{1}^{3}\overline{h}_{11i}+\overline{\mu}_{2}^{3}\overline{h}_{22i}+\overline{\mu}_{3}^{3}\overline{h}_{33i}+\overline{\mu}_{4}^{3}\overline{h}_{44i}=0,\  \mathrm{for}\ i=1,2,3,4.
\end{align}
Evaluating (2.7) and (2.8) at $p_{t}$ and sending $t\rightarrow\infty$, we obtain
\begin{align}
\sum_{i,j,k}\overline{h}_{ijk}^{2}=12,\\
\sum_{i,j,k}\overline{\mu}_{i}\overline{h}_{ijk}^{2}=0.
\end{align}
Because of $S=4$, $f_{3}=0$ and $f_{4}=8$, by using of Lemma 2.1, we get $A=16$, that is
\begin{equation}
\sum_{i,j,k}\overline{\mu}_{i}^{2}\overline{h}_{ijk}^{2}=16.
\end{equation}
We have to consider four cases.

\textbf{Case 1. $\overline{\mu}_{1}$, $\overline{\mu}_{2}$, $\overline{\mu}_{3}$ and $\overline{\mu}_{4}$ are all equal.}\\
Since $\overline{\mu}_{1}=\overline{\mu}_{2}=\overline{\mu}_{3}=\overline{\mu}_{4}$, by (2.14), we get $\overline{\mu}_{1}=\overline{\mu}_{2}=\overline{\mu}_{3}=\overline{\mu}_{4}=0$, that is $\overline{H}=0$.

\textbf{Case 2. Three of $\overline{\mu}_{1}$, $\overline{\mu}_{2}$, $\overline{\mu}_{3}$ and $\overline{\mu}_{4}$ are equal exactly.}\\
Without loss of generality, we assume that $\overline{\mu}_{1}\neq\overline{\mu}_{2}=\overline{\mu}_{3}=\overline{\mu}_{4}$. By (2.14), we get $\overline{\mu}_{1}^{3}+3\overline{\mu}_{2}^{3}=0$, that is $\overline{\mu}_{1}=-3^{\frac{1}{3}}\overline{\mu}_{2}$. Substituting it into (2.13) and (2.15), we have $(3^{\frac{2}{3}}+3)\overline{\mu}_{2}^{2}=4$ and $(3^{\frac{4}{3}}+3)\overline{\mu}_{2}^{4}=8$. After cancelling $\overline{\mu}_{2}$, we get $(3^{\frac{2}{3}}+3)^{2}=2(3^{\frac{4}{3}}+3)$. It's a contradiction.

\textbf{Case 3. Two of $\overline{\mu}_{1}$, $\overline{\mu}_{2}$, $\overline{\mu}_{3}$ and $\overline{\mu}_{4}$ are equal exactly.}\\
Without loss of generality, we assume that $\overline{\mu}_{3}=\overline{\mu}_{4}$.\\
\textbf{Subcase 1. $\overline{\mu}_{1}=\overline{\mu}_{2}\neq\overline{\mu}_{3}$.}\\
By (2.14), we get $2\overline{\mu}_{1}^{3}+2\overline{\mu}_{3}^{3}=0$, that is $\overline{\mu}_{1}=-\overline{\mu}_{3}$. Substituting it into (2.13) and (2.15), we have $\overline{\mu}_{1}^{2}=1$ and $\overline{\mu}_{1}^{4}=2$ respectively. It's a contradiction.\\
\textbf{Subcase 2. $\overline{\mu}_{1}$, $\overline{\mu}_{2}$ and $\overline{\mu}_{3}$ are not equal to each other.}\\
Since $\overline{\mu}_{3}=\overline{\mu}_{4}$, from (2.12), (2.16) and (2.17), we have
\begin{align*}
\overline{h}_{11i}+\overline{h}_{22i}+(\overline{h}_{33i}+\overline{h}_{44i})&=0,\\
\overline{\mu}_{1}\overline{h}_{11i}+\overline{\mu}_{2}\overline{h}_{22i}+\overline{\mu}_{3}(\overline{h}_{33i}+\overline{h}_{44i})&=0,\\
\overline{\mu}_{1}^{2}\overline{h}_{11i}+\overline{\mu}_{2}^{2}\overline{h}_{22i}+\overline{\mu}_{3}^{2}(\overline{h}_{33i}+\overline{h}_{44i})&=0, \ \mathrm{for}\ i=1,2,3,4.
\end{align*}
Thus
\begin{equation}
\overline{h}_{11i}=\overline{h}_{22i}=0,\ \ \overline{h}_{33i}+\overline{h}_{44i}=0, \ \mathrm{for}\ i=1,2,3,4.
\end{equation}
Then
\begin{equation*}
\begin{aligned}
\sum_{i,j,k}\overline{h}_{ijk}^{2}&=
\begin{aligned}[t]
&6(\overline{h}_{123}^{2}+\overline{h}_{124}^{2}+\overline{h}_{134}^{2}+\overline{h}_{234}^{2})+3(\overline{h}_{331}^{2}+\overline{h}_{332}^{2}+\overline{h}_{333}^{2}+\overline{h}_{334}^{2}\\
&+\overline{h}_{441}^{2}+\overline{h}_{442}^{2}+\overline{h}_{443}^{2}+\overline{h}_{444}^{2})\\
\end{aligned}\\
&=6(\overline{h}_{123}^{2}+\overline{h}_{124}^{2}+\overline{h}_{134}^{2}+\overline{h}_{234}^{2}+\overline{h}_{331}^{2}+\overline{h}_{332}^{2}+\overline{h}_{333}^{2}+\overline{h}_{334}^{2}).
\end{aligned}
\end{equation*}
Combining the above equality with (2.19), we obtain
\begin{equation}
\overline{h}_{123}^{2}+\overline{h}_{124}^{2}+\overline{h}_{134}^{2}+\overline{h}_{234}^{2}+\overline{h}_{331}^{2}+\overline{h}_{332}^{2}+\overline{h}_{333}^{2}+\overline{h}_{334}^{2}=2.
\end{equation}
By using of (2.22) and (2.23), (2.21) gives
\begin{align*}
16=&\sum_{i,j,k}\overline{\mu}_{i}^{2}\overline{h}_{ijk}^{2}\\
=&\sum_{j,k}(\overline{\mu}_{1}^{2}\overline{h}_{1jk}^{2}+\overline{\mu}_{2}^{2}\overline{h}_{2jk}^{2}+\overline{\mu}_{3}^{2}\overline{h}_{3jk}^{2}+\overline{\mu}_{4}^{2}\overline{h}_{4jk}^{2})\\
=&2\overline{\mu}_{1}^{2}(\overline{h}_{123}^{2}+\overline{h}_{124}^{2}+\overline{h}_{134}^{2})+2\overline{\mu}_{2}^{2}(\overline{h}_{123}^{2}+\overline{h}_{124}^{2}+\overline{h}_{234}^{2})\\
&+2\overline{\mu}_{3}^{2}(\overline{h}_{123}^{2}+\overline{h}_{134}^{2}+\overline{h}_{234}^{2})+2\overline{\mu}_{4}^{2}(\overline{h}_{124}^{2}+\overline{h}_{134}^{2}+\overline{h}_{234}^{2})\\
&+\overline{\mu}_{1}^{2}(\overline{h}_{331}^{2}+\overline{h}_{441}^{2})+\overline{\mu}_{2}^{2}(\overline{h}_{332}^{2}+\overline{h}_{442}^{2})+\overline{\mu}_{3}^{2}(\overline{h}_{333}^{2}+\overline{h}_{443}^{2})+\overline{\mu}_{4}^{2}(\overline{h}_{334}^{2}+\overline{h}_{444}^{2})\\
=&2\left[(\overline{\mu}_{1}^{2}+\overline{\mu}_{2}^{2}+\overline{\mu}_{3}^{2})\overline{h}_{123}^{2}
+(\overline{\mu}_{1}^{2}+\overline{\mu}_{2}^{2}+\overline{\mu}_{3}^{2})\overline{h}_{124}^{2}+(\overline{\mu}_{1}^{2}+\overline{\mu}_{2}^{2}+\overline{\mu}_{3}^{2})\overline{h}_{134}^{2} \right.\\
&+\left.(\overline{\mu}_{2}^{2}+\overline{\mu}_{3}^{2}+\overline{\mu}_{4}^{2})\overline{h}_{234}^{2}+\overline{\mu}_{1}^{2}\overline{h}_{331}^{2}
+\overline{\mu}_{2}^{2}\overline{h}_{332}^{2}+\overline{\mu}_{3}^{2}\overline{h}_{333}^{2}+\overline{\mu}_{4}^{2}\overline{h}_{334}^{2}  \right]\\
\leq&2\left[(\overline{\mu}_{1}^{2}+\overline{\mu}_{2}^{2}+\overline{\mu}_{3}^{2}+\overline{\mu}_{4}^{2})\overline{h}_{123}^{2}
+(\overline{\mu}_{1}^{2}+\overline{\mu}_{2}^{2}+\overline{\mu}_{3}^{2}+\overline{\mu}_{4}^{2})\overline{h}_{124}^{2}
+(\overline{\mu}_{1}^{2}+\overline{\mu}_{2}^{2}+\overline{\mu}_{3}^{2}+\overline{\mu}_{4}^{2})\overline{h}_{134}^{2} \right.\\
&+(\overline{\mu}_{1}^{2}+\overline{\mu}_{2}^{2}+\overline{\mu}_{3}^{2}+\overline{\mu}_{4}^{2})\overline{h}_{234}^{2}
+(\overline{\mu}_{1}^{2}+\overline{\mu}_{2}^{2}+\overline{\mu}_{3}^{2}+\overline{\mu}_{4}^{2})\overline{h}_{331}^{2}
+(\overline{\mu}_{1}^{2}+\overline{\mu}_{2}^{2}+\overline{\mu}_{3}^{2}+\overline{\mu}_{4}^{2})\overline{h}_{332}^{2}\\
&\left.+(\overline{\mu}_{1}^{2}+\overline{\mu}_{2}^{2}+\overline{\mu}_{3}^{2}+\overline{\mu}_{4}^{2})\overline{h}_{333}^{2}
+(\overline{\mu}_{1}^{2}+\overline{\mu}_{2}^{2}+\overline{\mu}_{3}^{2}+\overline{\mu}_{4}^{2})\overline{h}_{334}^{2}     \right]\\
=&2S(\overline{h}_{123}^{2}+\overline{h}_{124}^{2}+\overline{h}_{134}^{2}+\overline{h}_{234}^{2}+\overline{h}_{331}^{2}+\overline{h}_{332}^{2}+\overline{h}_{333}^{2}+\overline{h}_{334}^{2})\\
=&16.
\end{align*}
Thus the inequality above is actually an equality. This gives
\begin{equation}
\overline{\mu}_{4}^{2}\overline{h}_{123}^{2}=\overline{\mu}_{3}^{2}\overline{h}_{124}^{2}=\overline{\mu}_{2}^{2}\overline{h}_{134}^{2}=\overline{\mu}_{1}^{2}\overline{h}_{234}^{2}=0
\end{equation}
and
\begin{equation}
(\overline{\mu}_{2}^{2}+\overline{\mu}_{3}^{2}+\overline{\mu}_{4}^{2})\overline{h}_{331}^{2}
=(\overline{\mu}_{1}^{2}+\overline{\mu}_{3}^{2}+\overline{\mu}_{4}^{2})\overline{h}_{332}^{2}
=(\overline{\mu}_{1}^{2}+\overline{\mu}_{2}^{2}+\overline{\mu}_{4}^{2})\overline{h}_{333}^{2}
=(\overline{\mu}_{1}^{2}+\overline{\mu}_{2}^{2}+\overline{\mu}_{3}^{2})\overline{h}_{334}^{2}=0.
\end{equation}
Since $\overline{\mu}_{1}$, $\overline{\mu}_{2}$ and $\overline{\mu}_{3}$ are not equal to each other, from (2.25) we have
\begin{equation}
\overline{h}_{331}=\overline{h}_{332}=\overline{h}_{333}=\overline{h}_{334}=0.
\end{equation}
If $\overline{\mu}_{3}=\overline{\mu}_{4}=0$, then $\overline{\mu}_{1}$ and $\overline{\mu}_{2}$ are not equal to $0$. From (2.24) we have
\begin{equation}
\overline{h}_{134}=\overline{h}_{234}=0.
\end{equation}
Combining (2.23), (2.26) and (2.27), we get
\begin{equation}
\overline{h}_{123}^{2}+\overline{h}_{124}^{2}=2.
\end{equation}
Then (2.20) gives
\begin{equation*}
(\overline{\mu}_{1}+\overline{\mu}_{2})\overline{h}_{123}^{2}+(\overline{\mu}_{1}+\overline{\mu}_{2})\overline{h}_{124}^{2}=0.
\end{equation*}
Hence $\overline{\mu}_{1}+\overline{\mu}_{2}=0$, that is $\overline{H}=0$. If $\overline{\mu}_{3}=\overline{\mu}_{4}\neq 0$, then (2.24) gives
\begin{equation}
\overline{h}_{123}=\overline{h}_{124}=0.
\end{equation}
Combining (2.23), (2.26) and (2.29), we get
\begin{equation}
\overline{h}_{134}^{2}+\overline{h}_{234}^{2}=2.
\end{equation}
Since $\overline{\mu}_{1}\neq\overline{\mu}_{2}$, from (2.24) and (2.30), we know that one of them is equal to $0$ and the other one is not. Without loss of generality, we assume that $\overline{\mu}_{1}=0$ and $\overline{\mu}_{2}\neq 0$. Then (2.24) and (2.30) give $\overline{h}_{134}=0$ and $\overline{h}_{234}^{2}=2$. Immediately, (2.20) gives $(\overline{\mu}_{2}+\overline{\mu}_{3}+\overline{\mu}_{4})\overline{h}_{234}^{2}=0$. Thus $\overline{\mu}_{2}+\overline{\mu}_{3}+\overline{\mu}_{4}=0$, that is $\overline{H}=0$.

\textbf{Case 4. $\overline{\mu}_{1}$, $\overline{\mu}_{2}$, $\overline{\mu}_{3}$ and $\overline{\mu}_{4}$ are not equal to each other.}\\
By using of (2.12), (2.16), (2.17) and (2.18), we have that
\begin{equation}
\overline{h}_{11i}=\overline{h}_{22i}=\overline{h}_{33i}=\overline{h}_{44i}=0, \ \mathrm{for}\ i=1,2,3,4.
\end{equation}
From (2.19) we get
\begin{equation}
\overline{h}_{123}^{2}+\overline{h}_{124}^{2}+\overline{h}_{134}^{2}+\overline{h}_{234}^{2}=2.
\end{equation}
Similar to the previous case, (2.21) gives
\begin{align*}
16=&\sum_{i,j,k}\overline{\mu}_{i}^{2}\overline{h}_{ijk}^{2}\\
=&2\left[(\overline{\mu}_{1}^{2}+\overline{\mu}_{2}^{2}+\overline{\mu}_{3}^{2})\overline{h}_{123}^{2}
+(\overline{\mu}_{1}^{2}+\overline{\mu}_{2}^{2}+\overline{\mu}_{3}^{2})\overline{h}_{124}^{2}+(\overline{\mu}_{1}^{2}+\overline{\mu}_{2}^{2}+\overline{\mu}_{3}^{2})\overline{h}_{134}^{2} \right.\\
&+\left.(\overline{\mu}_{2}^{2}+\overline{\mu}_{3}^{2}+\overline{\mu}_{4}^{2})\overline{h}_{234}^{2}\right]\\
\leq&2S(\overline{h}_{123}^{2}+\overline{h}_{124}^{2}+\overline{h}_{134}^{2}+\overline{h}_{234}^{2})\\
=&16.
\end{align*}
Thus the inequality above is actually an equality. This gives
\begin{equation}
\overline{\mu}_{4}^{2}\overline{h}_{123}^{2}=\overline{\mu}_{3}^{2}\overline{h}_{124}^{2}=\overline{\mu}_{2}^{2}\overline{h}_{134}^{2}=\overline{\mu}_{1}^{2}\overline{h}_{234}^{2}=0
\end{equation}
If all of $\overline{\mu}_{1}$, $\overline{\mu}_{2}$, $\overline{\mu}_{3}$ and $\overline{\mu}_{4}$ are not equal to $0$, from (2.33) we know that $\overline{h}_{123}=\overline{h}_{124}=\overline{h}_{134}=\overline{h}_{234}=0$. It's a contradiction. Since $\overline{\mu}_{1}$, $\overline{\mu}_{2}$, $\overline{\mu}_{3}$ and $\overline{\mu}_{4}$ are not equal to each other, without loss of generality, we can assume that $\overline{\mu}_{4}=0$. Then we have
\begin{equation*}
\overline{h}_{124}=\overline{h}_{134}=\overline{h}_{234}=0,\ \ \overline{h}_{123}^{2}=2.
\end{equation*}
Combining the above equalities, from (2.20) we get $(\overline{\mu}_{1}+\overline{\mu}_{2}+\overline{\mu}_{3})\overline{h}_{123}^{2}=0$. Hence $\overline{\mu}_{1}+\overline{\mu}_{2}+\overline{\mu}_{3}=0$, that is $\overline{H}=0$.

In conclusion, $\sup H=\overline{H}=0$. By applying the generalized maximum principle for $\mathcal{L}$-operator to $-H$ and using the similar argument, we obtain that $\inf H=0$. Hence $H=0$. The lemma follows.

\end{proof}

\section{The case of $f_{3}=0$ when $H$ vanishes somewhere}
From now on, we suppose that $X:M^{4}\rightarrow \mathbb{R}^{5}$ be a $4$-dimensional complete self-shrinker in $\mathbb{R}^{5}$ with constant $S$, $f_{3}=0$ and constant $f_{4}$. From (2.7), we know that $S=0$ or $S\geq 1$. If $S=0$, then $M$ is isometric to $\mathbb{R}^{4}$. Next, we assume that $S\geq 1$. By Lemma 2.4, it's easy to see that $\inf |H|=0$ since $f_{3}=0$. In this section, we will deal with the case when $H$ vanishes somewhere. Assume that $H(P)=0, P\in M$. All the rest of the calculations will be done at $P$ in this section. We take orthonormal frames $\{e_{i}\}_{i=1}^{4}$ such that $h_{ij}=\lambda_{i}\delta_{ij}$ at $P$, for $1\leq i,j\leq 4$, and $\lambda_{1}\geq\lambda_{2}\geq\lambda_{3}\geq\lambda_{4}$. Note that $H(P)=0$ and $f_{3}=0$, by Proposition 2.2, we obtain
\begin{equation}
\lambda_{1}+\lambda_{4}=0,\ \ \ \lambda_{2}+\lambda_{3}=0.
\end{equation}
We will begin with the first and the second derivatives of $H$, $S$, $f_{3}$ and $f_{4}$. By (2.5) and (2.6), we have
\begin{equation}
\nabla_{i}H=\lambda_{i}\langle X(P),e_{i}\rangle,\ \ \ i=1,2,3,4,
\end{equation}
and
\begin{equation}
\left\{\begin{aligned}
&\nabla_{i}\nabla_{i}H=\sum\limits_{k}h_{iik}\langle X(P),e_{k}\rangle+\lambda_{i},\ \ \ i=1,2,3,4,\\
&\nabla_{j}\nabla_{i}H=\sum\limits_{k}h_{ijk}\langle X(P),e_{k}\rangle,\ \ \ i\neq j,\ \ \ i,j=1,2,3,4.
\end{aligned}\right.
\end{equation}

Since $S$ is constant, after taking the first and the second derivatives at $P$, we have
\begin{equation*}
\sum\limits_{i,j}h_{ij}h_{ijk}=0,\ \ \ k=1,2,3,4,
\end{equation*}
\begin{equation*}
\sum\limits_{i,j}h_{ij}h_{ijkl}+\sum\limits_{i,j}h_{ijk}h_{ijl}=0,\ \ \ k,l=1,2,3,4.
\end{equation*}
that is
\begin{equation}
\sum_{i}\lambda_{i}h_{iik}=0,\ \ \ k=1,2,3,4.
\end{equation}
and
\begin{equation}
\left\{\begin{aligned}
&\sum\limits_{i}\lambda_{i}h_{iikk}=-\sum\limits_{i,j}h_{ijk}^{2},\ \ \ k=1,2,3,4,\\
&\sum\limits_{i}\lambda_{i}h_{iikl}=-\sum\limits_{i,j}h_{ijk}h_{ijl},\ \ \ k\neq l,\ \ \ k,l=1,2,3,4.
\end{aligned}\right.
\end{equation}
Similarly, because $f_{3}=0$, we have
\begin{equation*}
\sum\limits_{i,j,k}h_{ij}h_{jk}h_{kil}=0,\ \ \ l=1,2,3,4,
\end{equation*}
\begin{equation*}
2\sum\limits_{i,j,k}h_{ij}h_{kil}h_{kjm}+\sum\limits_{i,j,k}h_{ij}h_{jk}h_{kilm}=0,\ \ \ l,m=1,2,3,4,
\end{equation*}
that is
\begin{equation}
\sum\limits_{i}\lambda_{i}^{2}h_{iik}=0,\ \ \ k=1,2,3,4,
\end{equation}
and
\begin{equation}
\left\{\begin{aligned}
&\sum\limits_{i}\lambda_{i}^{2}h_{iikk}=-2\sum\limits_{i,j}\lambda_{i}h_{ijk}^{2},\ \ \ k=1,2,3,4,\\
&\sum\limits_{i}\lambda_{i}^{2}h_{iikl}=-2\sum\limits_{i,j}\lambda_{i}h_{ijk}h_{ijl},\ \ \ k\neq l,\ \ \ k,l=1,2,3,4.
\end{aligned}\right.
\end{equation}
Since $f_{4}=$constant, we obtain
\begin{equation*}
\sum\limits_{i,j,k,l}h_{ij}h_{jk}h_{kl}h_{lim}=0,\ \ \ m=1,2,3,4,
\end{equation*}
\begin{equation*}
2\sum\limits_{i,j,k,l}h_{ij}h_{jk}h_{klr}h_{lim}+\sum\limits_{i,j,k,l}h_{ij}h_{kl}h_{jkr}h_{lim}+\sum\limits_{i,j,k,l}h_{ij}h_{jk}h_{kl}h_{limr}=0,\ \ \ m,r=1,2,3,4,
\end{equation*}
that is
\begin{equation}
\sum\limits_{i}\lambda_{i}^{3}h_{iik}=0,\ \ \ k=1,2,3,4,
\end{equation}
and
\begin{equation}
\left\{\begin{aligned}
&\sum\limits_{i}\lambda_{i}^{3}h_{iikk}=-\sum\limits_{i,j}(2\lambda_{i}^{2}+\lambda_{i}\lambda_{j})h_{ijk}^{2},\ \ \ k=1,2,3,4.\\
&\sum\limits_{i}\lambda_{i}^{3}h_{iikl}=-\sum\limits_{i,j}(2\lambda_{i}^{2}+\lambda_{i}\lambda_{j})h_{ijk}h_{ijl},\ \ \ k\neq l,\ \ \ k,l=1,2,3,4.
\end{aligned}\right.
\end{equation}
By (2.1) and (2.3), we have the following commutation formulas
\begin{equation*}
\begin{aligned}
&h_{ijkl}-h_{ijlk}=
\begin{aligned}[t]
&\lambda_{j}\lambda_{k}\lambda_{i}\delta_{jk}\delta_{il}-\lambda_{j}\lambda_{l}\lambda_{i}\delta_{jl}\delta_{ik}\\
&+\lambda_{i}\lambda_{k}\lambda_{j}\delta_{ik}\delta_{jl}-\lambda_{i}\lambda_{l}\lambda_{j}\delta_{il}\delta_{jk},\\
\end{aligned}\\
\end{aligned}
\end{equation*}
that is
\begin{equation}
\left\{\begin{aligned}
&h_{ijkl}=h_{ijlk},\ \ \ \mathrm{if}\  i,j\  \mathrm{and}\ k\ \mathrm{are\ not\ equal\ to\ each\ other},\\
&h_{iikl}=h_{iilk},\ \ \ \mathrm{for}\ 1\leq i,k,l\leq 4,\\
&h_{1212}-h_{1221}=\lambda_{1}\lambda_{2}(\lambda_{1}-\lambda_{2}),\ \ h_{1313}-h_{1331}=\lambda_{1}\lambda_{3}(\lambda_{1}-\lambda_{3}),\\
&h_{1414}-h_{1441}=\lambda_{1}\lambda_{4}(\lambda_{1}-\lambda_{4}),\ \ h_{2323}-h_{2332}=\lambda_{2}\lambda_{3}(\lambda_{2}-\lambda_{3}),\\
&h_{2424}-h_{2442}=\lambda_{2}\lambda_{4}(\lambda_{2}-\lambda_{4}),\ \ h_{3434}-h_{3443}=\lambda_{3}\lambda_{4}(\lambda_{3}-\lambda_{4}),\\
\end{aligned}\right.
\end{equation}
According to (3.1), we have to consider three cases of principal curvature at $P$.

\textbf{Case \rmnum{1}: $\lambda_{1}=\lambda_{2}>0$.}\\
By (3.1), we have
\begin{equation}
\lambda_{1}=\lambda_{2}=-\lambda_{3}=-\lambda_{4}>0.
\end{equation}
From (3.4) and (3.6), we obtain
\begin{equation*}
\left\{\begin{aligned}
&\lambda_{1}h_{11k}+\lambda_{1}h_{22k}-\lambda_{1}h_{33k}-\lambda_{1}h_{44k}=0,\\
&\lambda_{1}^{2}h_{11k}+\lambda_{1}^{2}h_{22k}+\lambda_{1}^{2}h_{33k}+\lambda_{1}^{2}h_{44k}=0,\ \ \ k=1,2,3,4.
\end{aligned}\right.
\end{equation*}
Thus,
\begin{equation}
h_{11k}=-h_{22k},\ \ \ h_{33k}=-h_{44k},\ \ \ k=1,2,3,4.
\end{equation}
By (3.2), (3.11) and (3.12), we have
\begin{equation}
\langle X(P),e_{i}\rangle=0,\ \ \ i=1,2,3,4.
\end{equation}
Taking (3.13) into (3.3), we obtain
\begin{equation}
\nabla_{i}\nabla_{i}H=\lambda_{i},\ \ \ i=1,2,3,4.
\end{equation}
Multiplying both sides of the first set of equalities of (3.5) by $\lambda_{1}^{2}$, and combining (3.11), we conclude
\begin{equation*}
\sum\limits_{i}\lambda_{i}^{3}h_{iikk}=-\lambda_{1}^{2}\sum\limits_{i,j}h_{ijk}^{2},\ \ \ k=1,2,3,4.
\end{equation*}
Taking the above equalities into the first set of equalities of (3.9), and combining (3.11), we have
\begin{equation}
\sum\limits_{i,j}(\lambda_{1}^{2}+\lambda_{i}\lambda_{j})h_{ijk}^{2}=0,\ \ \ k=1,2,3,4.
\end{equation}
Pay attention to (3.11). Since $\lambda_{1}^{2}+\lambda_{i}\lambda_{j}\geq 0$ and $h_{ijk}^{2}\geq 0$ for any $1\leq i,j,k\leq 4$, we have
\begin{equation}
(\lambda_{1}^{2}+\lambda_{i}\lambda_{j})h_{ijk}^{2}=0,\ \ \ \mathrm{for}\ \  1\leq i,j,k\leq 4.
\end{equation}
Setting $i=j$ in (3.16), yields
\begin{equation}
h_{iik}=0,\ \ \ \mathrm{for}\ \ \ 1\leq i,k\leq 4.
\end{equation}
Setting $i=1, j=2$ in (3.16) yields
\begin{equation}
h_{123}=h_{124}=0.
\end{equation}
Setting $i=3, j=4$ in (3.16) yields
\begin{equation}
h_{341}=h_{342}=0.
\end{equation}
By (2.2), (3.17), (3.18) and (3.19), we have
\begin{equation*}
h_{ijk}=0,\ \ \ \mathrm{for}\ \ \ 1\leq i,j,k\leq 4.
\end{equation*}
This yields $S\equiv1$ and $\nabla h\equiv 0$ by (2.7). Thus $M$ is isometric to one of $S^{k}(\sqrt{k})\times\mathbb{R}^{4-k}(0\leq k \leq 3)$ and $S^{4}(\sqrt{4})$. It's a contradiction with $f_{3}=0$.

\textbf{Case \rmnum{2}: $\lambda_{1}, \lambda_{2}, \lambda_{3}$ and $\lambda_{4}$ are not equal to each other.}\\
In this case, by (3.1), we know that all principal curvatures don't vanish at $P$. By (3.1), (3.4), (3.6) and (3.8), we have
\begin{equation*}
\left\{\begin{aligned}
&\lambda_{1}h_{11k}+\lambda_{2}h_{22k}-\lambda_{2}h_{33k}=\lambda_{1}h_{44k},\\
&\lambda_{1}^{2}h_{11k}+\lambda_{1}^{2}h_{22k}+\lambda_{2}^{2}h_{33k}=-\lambda_{1}^{2}h_{44k},\\
&\lambda_{1}^{3}h_{11k}+\lambda_{2}^{3}h_{22k}-\lambda_{2}^{3}h_{33k}=\lambda_{1}^{3}h_{44k},\\
\end{aligned}\right.
\end{equation*}
for $k=1,2,3,4$. This can be considered as an linear equation system of $h_{11k}$, $h_{22k}$ and $h_{33k}$. By standard linear algebra theory, we obtain
\begin{equation}
h_{11k}=h_{44k},\ \ \ h_{22k}=h_{33k}=-\frac{\lambda_{1}^{2}}{\lambda_{2}^{2}}h_{44k},\ \ \ k=1,2,3,4.
\end{equation}
Taking (3.20) into (3.2), we have
\begin{equation*}
\langle X(P),e_{k}\rangle=\frac{2}{\lambda_{k}}\left(1-\frac{\lambda_{1}^{2}}{\lambda_{2}^{2}}\right)h_{44k},\ \ \ k=1,2,3,4,
\end{equation*}
that is
\begin{equation}
\left\{\begin{aligned}
&\langle X(P),e_{1}\rangle=\frac{2}{\lambda_{1}}\left(1-\frac{\lambda_{1}^{2}}{\lambda_{2}^{2}}\right)h_{441},\\
&\langle X(P),e_{2}\rangle=\frac{2}{\lambda_{2}}\left(1-\frac{\lambda_{1}^{2}}{\lambda_{2}^{2}}\right)h_{442},\\
&\langle X(P),e_{3}\rangle=-\frac{2}{\lambda_{2}}\left(1-\frac{\lambda_{1}^{2}}{\lambda_{2}^{2}}\right)h_{443},\\
&\langle X(P),e_{4}\rangle=-\frac{2}{\lambda_{1}}\left(1-\frac{\lambda_{1}^{2}}{\lambda_{2}^{2}}\right)h_{444}.
\end{aligned}\right.
\end{equation}
Taking (3.21) into (3.3), we obtain
\begin{equation}
\left\{\begin{aligned}
&H_{,11}=2\left(1-\frac{\lambda_{1}^{2}}{\lambda_{2}^{2}}\right)\left(\frac{1}{\lambda_{1}}h_{441}^{2}
+\frac{1}{\lambda_{2}}h_{442}^{2}-\frac{1}{\lambda_{2}}h_{443}^{2}-\frac{1}{\lambda_{1}}h_{444}^{2}\right)+\lambda_{1},\\
&H_{,22}=-\frac{2\lambda_{1}^{2}}{\lambda_{2}^{2}}\left(1-\frac{\lambda_{1}^{2}}{\lambda_{2}^{2}}\right)\left(\frac{1}{\lambda_{1}}h_{441}^{2}
+\frac{1}{\lambda_{2}}h_{442}^{2}-\frac{1}{\lambda_{2}}h_{443}^{2}-\frac{1}{\lambda_{1}}h_{444}^{2}\right)+\lambda_{2},\\
&H_{,33}=-\frac{2\lambda_{1}^{2}}{\lambda_{2}^{2}}\left(1-\frac{\lambda_{1}^{2}}{\lambda_{2}^{2}}\right)\left(\frac{1}{\lambda_{1}}h_{441}^{2}
+\frac{1}{\lambda_{2}}h_{442}^{2}-\frac{1}{\lambda_{2}}h_{443}^{2}-\frac{1}{\lambda_{1}}h_{444}^{2}\right)-\lambda_{2},\\
&H_{,44}=2\left(1-\frac{\lambda_{1}^{2}}{\lambda_{2}^{2}}\right)\left(\frac{1}{\lambda_{1}}h_{441}^{2}
+\frac{1}{\lambda_{2}}h_{442}^{2}-\frac{1}{\lambda_{2}}h_{443}^{2}-\frac{1}{\lambda_{1}}h_{444}^{2}\right)-\lambda_{1}.
\end{aligned}\right.
\end{equation}
By (3.1), (3.5) and (3.20), we obtain
\begin{equation}
\left\{
\begin{aligned}
&\lambda_{1}\left(h_{1111}-h_{4411}\right)+\lambda_{2}\left(h_{2211}-h_{3311}\right)=
\begin{aligned}[t]
&-2\left[\left(1+\frac{\lambda_{1}^{4}}{\lambda_{2}^{4}}\right)h_{441}^{2}+h_{442}^{2}+h_{443}^{2}\right.\\
&\left.+h_{444}^{2}+h_{123}^{2}+h_{124}^{2}+h_{134}^{2}\right],\\
\end{aligned}\\
&\lambda_{1}\left(h_{1122}-h_{4422}\right)+\lambda_{2}\left(h_{2222}-h_{3322}\right)=
\begin{aligned}[t]
&-2\left[\left(1+\frac{\lambda_{1}^{4}}{\lambda_{2}^{4}}\right)h_{442}^{2}
+\frac{\lambda_{1}^{4}}{\lambda_{2}^{4}}h_{441}^{2}+\frac{\lambda_{1}^{4}}{\lambda_{2}^{4}}h_{443}^{2}\right.\\
&\left.+\frac{\lambda_{1}^{4}}{\lambda_{2}^{4}}h_{444}^{2}
+h_{123}^{2}+h_{124}^{2}+h_{234}^{2}\right],\\
\end{aligned}\\
&\lambda_{1}\left(h_{1133}-h_{4433}\right)+\lambda_{2}\left(h_{2233}-h_{3333}\right)=
\begin{aligned}[t]
&-2\left[\left(1+\frac{\lambda_{1}^{4}}{\lambda_{2}^{4}}\right)h_{443}^{2}
+\frac{\lambda_{1}^{4}}{\lambda_{2}^{4}}h_{441}^{2}+\frac{\lambda_{1}^{4}}{\lambda_{2}^{4}}h_{442}^{2}\right.\\
&\left.+\frac{\lambda_{1}^{4}}{\lambda_{2}^{4}}h_{444}^{2}
+h_{123}^{2}+h_{134}^{2}+h_{234}^{2}\right],\\
\end{aligned}\\
&\lambda_{1}\left(h_{1144}-h_{4444}\right)+\lambda_{2}\left(h_{2244}-h_{3344}\right)=
\begin{aligned}[t]
&-2\left[\left(1+\frac{\lambda_{1}^{4}}{\lambda_{2}^{4}}\right)h_{444}^{2}+h_{441}^{2}+h_{442}^{2}\right.\\
&\left.+h_{443}^{2}+h_{124}^{2}+h_{134}^{2}+h_{234}^{2}\right].\\
\end{aligned}\\
\end{aligned}
\right.
\end{equation}
By (3.1), (3.7) and (3.20), we obtain
\begin{equation}
\left\{
\begin{aligned}
&\lambda_{1}^{2}\left(h_{1111}+h_{4411}\right)+\lambda_{2}^{2}\left(h_{2211}+h_{3311}\right)=
\begin{aligned}[t]
&-2\left[\left(\lambda_{1}+\lambda_{2}\right)h_{442}^{2}+\left(\lambda_{1}-\lambda_{2}\right)h_{443}^{2}\right.\\
&\left.-\left(\lambda_{1}-\lambda_{2}\right)h_{124}^{2}-\left(\lambda_{1}+\lambda_{2}\right)h_{134}^{2}\right],\\
\end{aligned}\\
&\lambda_{1}^{2}\left(h_{1122}+h_{4422}\right)+\lambda_{2}^{2}\left(h_{2222}+h_{3322}\right)=
\begin{aligned}[t]
&-2\left[\frac{\lambda_{1}^{4}}{\lambda_{2}^{4}}\left(\lambda_{1}+\lambda_{2}\right)h_{441}^{2}-\frac{\lambda_{1}^{4}}{\lambda_{2}^{4}}\left(\lambda_{1}-\lambda_{2}\right)h_{444}^{2}\right.\\
&\left.+\left(\lambda_{1}-\lambda_{2}\right)h_{123}^{2}-\left(\lambda_{1}+\lambda_{2}\right)h_{234}^{2}\right],\\
\end{aligned}\\
&\lambda_{1}^{2}\left(h_{1133}+h_{4433}\right)+\lambda_{2}^{2}\left(h_{2233}+h_{3333}\right)=
\begin{aligned}[t]
&-2\left[\frac{\lambda_{1}^{4}}{\lambda_{2}^{4}}\left(\lambda_{1}-\lambda_{2}\right)h_{441}^{2}-\frac{\lambda_{1}^{4}}{\lambda_{2}^{4}}\left(\lambda_{1}+\lambda_{2}\right)h_{444}^{2}\right.\\
&\left.+\left(\lambda_{1}+\lambda_{2}\right)h_{123}^{2}-\left(\lambda_{1}-\lambda_{2}\right)h_{234}^{2}\right],\\
\end{aligned}\\
&\lambda_{1}^{2}\left(h_{1144}+h_{4444}\right)+\lambda_{2}^{2}\left(h_{2244}+h_{3344}\right)=
\begin{aligned}[t]
&-2\left[-\left(\lambda_{1}-\lambda_{2}\right)h_{442}^{2}-\left(\lambda_{1}+\lambda_{2}\right)h_{443}^{2}\right.\\
&\left.+\left(\lambda_{1}+\lambda_{2}\right)h_{124}^{2}+\left(\lambda_{1}-\lambda_{2}\right)h_{134}^{2}\right].\\
\end{aligned}\\
\end{aligned}
\right.
\end{equation}
By (3.1), (3.9) and (3.20), we obtain
\begin{equation}
\left\{
\begin{aligned}
&\lambda_{1}^{3}\left(h_{1111}-h_{4411}\right)+\lambda_{2}^{3}\left(h_{2211}-h_{3311}\right)=
\begin{aligned}[t]
&-2\left[\frac{3\lambda_{1}^{2}\left(\lambda_{1}^{2}+\lambda_{2}^{2}\right)}{\lambda_{2}^{2}}h_{441}^{2}+\left(\lambda_{1}^{2}+\lambda_{2}^{2}+\lambda_{1}\lambda_{2}\right)h_{442}^{2}\right.\\
&+\left(\lambda_{1}^{2}+\lambda_{2}^{2}-\lambda_{1}\lambda_{2}\right)h_{443}^{2}+\lambda_{1}^{2}h_{444}^{2}+\lambda_{2}^{2}h_{123}^{2}\\
&\left.+\left(\lambda_{1}^{2}+\lambda_{2}^{2}-\lambda_{1}\lambda_{2}\right)h_{124}^{2}+\left(\lambda_{1}^{2}+\lambda_{2}^{2}+\lambda_{1}\lambda_{2}\right)h_{134}^{2}\right],\\
\end{aligned}\\
&\lambda_{1}^{3}\left(h_{1122}-h_{4422}\right)+\lambda_{2}^{3}\left(h_{2222}-h_{3322}\right)=
\begin{aligned}[t]
&-2\left[\frac{\lambda_{1}^{4}}{\lambda_{2}^{4}}\left(\lambda_{1}^{2}+\lambda_{2}^{2}+\lambda_{1}\lambda_{2}\right)h_{441}^{2}+\frac{3\lambda_{1}^{2}\left(\lambda_{1}^{2}+\lambda_{2}^{2}\right)}{\lambda_{2}^{2}}h_{442}^{2}\right.\\
&+\frac{\lambda_{1}^{4}}{\lambda_{2}^{2}}h_{443}^{2}+\frac{\lambda_{1}^{4}}{\lambda_{2}^{4}}\left(\lambda_{1}^{2}+\lambda_{2}^{2}-\lambda_{1}\lambda_{2}\right)h_{444}^{2}+\lambda_{1}^{2}h_{124}^{2}\\
&\left.+\left(\lambda_{1}^{2}+\lambda_{2}^{2}-\lambda_{1}\lambda_{2}\right)h_{123}^{2}+\left(\lambda_{1}^{2}+\lambda_{2}^{2}+\lambda_{1}\lambda_{2}\right)h_{234}^{2}\right],\\
\end{aligned}\\
&\lambda_{1}^{3}\left(h_{1133}-h_{4433}\right)+\lambda_{2}^{3}\left(h_{2233}-h_{3333}\right)=
\begin{aligned}[t]
&-2\left[\frac{\lambda_{1}^{4}}{\lambda_{2}^{4}}\left(\lambda_{1}^{2}+\lambda_{2}^{2}-\lambda_{1}\lambda_{2}\right)h_{441}^{2}+\frac{3\lambda_{1}^{2}\left(\lambda_{1}^{2}+\lambda_{2}^{2}\right)}{\lambda_{2}^{2}}h_{443}^{2}\right.\\
&+\frac{\lambda_{1}^{4}}{\lambda_{2}^{2}}h_{442}^{2}+\frac{\lambda_{1}^{4}}{\lambda_{2}^{4}}\left(\lambda_{1}^{2}+\lambda_{2}^{2}+\lambda_{1}\lambda_{2}\right)h_{444}^{2}+\lambda_{1}^{2}h_{134}^{2}\\
&\left.+\left(\lambda_{1}^{2}+\lambda_{2}^{2}+\lambda_{1}\lambda_{2}\right)h_{123}^{2}+\left(\lambda_{1}^{2}+\lambda_{2}^{2}-\lambda_{1}\lambda_{2}\right)h_{234}^{2}\right],\\
\end{aligned}\\
&\lambda_{1}^{3}\left(h_{1144}-h_{4444}\right)+\lambda_{2}^{3}\left(h_{2244}-h_{3344}\right)=
\begin{aligned}[t]
&-2\left[\frac{3\lambda_{1}^{2}\left(\lambda_{1}^{2}+\lambda_{2}^{2}\right)}{\lambda_{2}^{2}}h_{444}^{2}+\left(\lambda_{1}^{2}+\lambda_{2}^{2}-\lambda_{1}\lambda_{2}\right)h_{442}^{2}\right.\\
&+\left(\lambda_{1}^{2}+\lambda_{2}^{2}+\lambda_{1}\lambda_{2}\right)h_{443}^{2}+\lambda_{1}^{2}h_{441}^{2}+\lambda_{2}^{2}h_{234}^{2}\\
&\left.+\left(\lambda_{1}^{2}+\lambda_{2}^{2}+\lambda_{1}\lambda_{2}\right)h_{124}^{2}+\left(\lambda_{1}^{2}+\lambda_{2}^{2}-\lambda_{1}\lambda_{2}\right)h_{134}^{2}\right].\\
\end{aligned}\\
\end{aligned}
\right.
\end{equation}
Combining (3.22), (3.23), (3.24) and (3.25), we obtain
\begin{equation}
\begin{aligned}
&h_{4411}=
\begin{aligned}[t]
&\frac{2(\lambda_{1}^{4}+2\lambda_{1}^{2}\lambda_{2}^{2}-\lambda_{2}^{4})}{\lambda_{1}\lambda_{2}^{2}(\lambda_{1}^{2}-\lambda_{2}^{2})}h_{441}^{2}
+\frac{1}{\lambda_{2}}h_{442}^{2}-\frac{1}{\lambda_{2}}h_{443}^{2}+\frac{2}{\lambda_{1}+\lambda_{2}}h_{124}^{2}\\
&+\frac{2}{\lambda_{1}-\lambda_{2}}h_{134}^{2}-\frac{\lambda_{1}\lambda_{2}^{2}}{2(\lambda_{1}^{2}-\lambda_{2}^{2})},\\
\end{aligned}\\
\end{aligned}
\end{equation}

\begin{equation}
\begin{aligned}
&h_{3322}=
\begin{aligned}[t]
&\frac{2\lambda_{1}^{2}(\lambda_{2}^{4}+2\lambda_{1}^{2}\lambda_{2}^{2}-\lambda_{1}^{4})}{\lambda_{2}^{5}(\lambda_{2}^{2}-\lambda_{1}^{2})}h_{442}^{2}
+\frac{\lambda_{1}^{3}}{\lambda_{2}^{4}}h_{441}^{2}-\frac{\lambda_{1}^{3}}{\lambda_{2}^{4}}h_{444}^{2}+\frac{2}{\lambda_{1}+\lambda_{2}}h_{123}^{2}\\
&+\frac{2}{\lambda_{2}-\lambda_{1}}h_{234}^{2}+\frac{\lambda_{2}\lambda_{1}^{2}}{2(\lambda_{1}^{2}-\lambda_{2}^{2})},\\
\end{aligned}\\
\end{aligned}
\end{equation}

\begin{equation}
\begin{aligned}
&h_{2233}=
\begin{aligned}[t]
&\frac{2\lambda_{1}^{2}(\lambda_{2}^{4}+2\lambda_{1}^{2}\lambda_{2}^{2}-\lambda_{1}^{4})}{\lambda_{2}^{5}(\lambda_{1}^{2}-\lambda_{2}^{2})}h_{443}^{2}
+\frac{\lambda_{1}^{3}}{\lambda_{2}^{4}}h_{441}^{2}-\frac{\lambda_{1}^{3}}{\lambda_{2}^{4}}h_{444}^{2}+\frac{2}{\lambda_{1}-\lambda_{2}}h_{123}^{2}\\
&-\frac{2}{\lambda_{1}+\lambda_{2}}h_{234}^{2}-\frac{\lambda_{2}\lambda_{1}^{2}}{2(\lambda_{1}^{2}-\lambda_{2}^{2})},\\
\end{aligned}\\
\end{aligned}
\end{equation}

\begin{equation}
\begin{aligned}
&h_{1144}=
\begin{aligned}[t]
&\frac{1}{\lambda_{2}}h_{442}^{2}-\frac{1}{\lambda_{2}}h_{443}^{2}-\frac{2(\lambda_{1}^{4}+2\lambda_{1}^{2}\lambda_{2}^{2}-\lambda_{2}^{4})}{\lambda_{1}\lambda_{2}^{2}(\lambda_{1}^{2}-\lambda_{2}^{2})}h_{444}^{2}
-\frac{2}{\lambda_{1}-\lambda_{2}}h_{124}^{2}\\
&-\frac{2}{\lambda_{1}+\lambda_{2}}h_{134}^{2}+\frac{\lambda_{1}\lambda_{2}^{2}}{2(\lambda_{1}^{2}-\lambda_{2}^{2})}.\\
\end{aligned}\\
\end{aligned}
\end{equation}
By (3.10), (3.26) and (3.29), we conclude that
\begin{equation}
\frac{\lambda_{1}^{4}+2\lambda_{1}^{2}\lambda_{2}^{2}-\lambda_{2}^{4}}{\lambda_{1}^{2}\lambda_{2}^{2}}\left(h_{441}^{2}+h_{444}^{2} \right)+2\left( h_{124}^{2}+h_{134}^{2}\right)
=\frac{\lambda_{2}^{2}}{2}+\lambda_{1}^{2}\left(\lambda_{1}^{2}-\lambda_{2}^{2}\right).
\end{equation}
Similarly, by (3.10), (3.27) and (3.28), we conclude that
\begin{equation}
\frac{\lambda_{1}^{4}+2\lambda_{1}^{2}\lambda_{2}^{2}-\lambda_{2}^{4}}{\lambda_{1}^{2}\lambda_{2}^{2}}\left(h_{442}^{2}+h_{443}^{2} \right)+2\left( h_{123}^{2}+h_{234}^{2}\right)
=\frac{\lambda_{1}^{2}}{2}-\lambda_{2}^{2}\left(\lambda_{1}^{2}-\lambda_{2}^{2}\right).
\end{equation}
Since $S=$constant, from (2.7), we have
\begin{equation}
2\left(2+3\frac{\lambda_{1}^{4}}{\lambda_{2}^{4}}  \right)\left( h_{441}^{2}+h_{444}^{2}\right)+2\left(3+2\frac{\lambda_{1}^{4}}{\lambda_{2}^{4}}  \right)\left( h_{442}^{2}+h_{443}^{2}\right)+6\left( h_{124}^{2}+h_{134}^{2}+h_{123}^{2}+h_{234}^{2}\right)=S(S-1).
\end{equation}
Combining (3.30), (3.31) and (3.32), we obtain
\begin{equation}
\begin{aligned}
&\frac{3\lambda_{1}^{4}\lambda_{2}^{2}+2\lambda_{1}^{2}\lambda_{2}^{4}-3\lambda_{2}^{6}-6\lambda_{1}^{6}}{\lambda_{1}^{2}\lambda_{2}^{4}}\left( h_{441}^{2}+h_{444}^{2}\right)+
\frac{3\lambda_{1}^{2}\lambda_{2}^{4}+2\lambda_{1}^{4}\lambda_{2}^{2}-3\lambda_{1}^{6}-6\lambda_{2}^{6}}{\lambda_{2}^{6}}\left( h_{442}^{2}+h_{443}^{2}\right)\\
=&\frac{7}{2}\left(\lambda_{1}^{2}+\lambda_{2}^{2} \right)+3\left(\lambda_{1}^{2}-\lambda_{2}^{2} \right)^{2}-4\left(\lambda_{1}^{2}+\lambda_{2}^{2} \right)^{2}.
\end{aligned}
\end{equation}
By Lemma 2.1, considering the invariant $A$ first, we have
\begin{equation}
\begin{aligned}
A=&\frac{1}{5}(3Sf_{4}-2f_{4}-f_{3}^{2})\\
=&2\lambda_{1}^{2}\left(2+\frac{\lambda_{1}^{4}}{\lambda_{2}^{4}}+2\frac{\lambda_{1}^{2}}{\lambda_{2}^{2}} \right)\left( h_{441}^{2}+h_{444}^{2}\right)+2
\left(2\lambda_{1}^{2}+\lambda_{2}^{2}+2\frac{\lambda_{1}^{4}}{\lambda_{2}^{2}} \right)\left( h_{442}^{2}+h_{443}^{2}\right)\\
&+2\left(\lambda_{1}^{2}+2\lambda_{2}^{2}\right)\left( h_{123}^{2}+h_{234}^{2}\right)+2\left(2\lambda_{1}^{2}+\lambda_{2}^{2}\right)\left( h_{124}^{2}+h_{134}^{2}\right).
\end{aligned}
\end{equation}
Combining (3.30), (3.31) and (3.34), we obtain
\begin{equation}
\begin{aligned}
&\frac{2\lambda_{1}^{8}+\lambda_{2}^{8}+2\lambda_{1}^{6}\lambda_{2}^{2}-\lambda_{1}^{4}\lambda_{2}^{4}}{\lambda_{1}^{2}\lambda_{2}^{4}}\left( h_{441}^{2}+h_{444}^{2}\right)+
\frac{2\lambda_{2}^{8}+\lambda_{1}^{8}+2\lambda_{1}^{2}\lambda_{2}^{6}-\lambda_{1}^{4}\lambda_{2}^{4}}{\lambda_{2}^{6}}\left( h_{442}^{2}+h_{443}^{2}\right)\\
=&\frac{2}{5}\left(\lambda_{1}^{6}+\lambda_{2}^{6} \right)+\frac{22}{5}\left(\lambda_{1}^{4}\lambda_{2}^{2}+\lambda_{1}^{2}\lambda_{2}^{4} \right)-\frac{13}{10}\left(\lambda_{1}^{4}+\lambda_{2}^{4} \right)-2\lambda_{1}^{2}\lambda_{2}^{2}.
\end{aligned}
\end{equation}
Now, (3.33) and (3.35) can be considered as an linear equation system of $h_{441}^{2}+h_{444}^{2}$ and $h_{442}^{2}+h_{443}^{2}$. By standard linear algebra theory, we obtain
\begin{equation}
h_{441}^{2}+h_{444}^{2}=\frac{D^{(1)}}{D},\ \ h_{442}^{2}+h_{443}^{2}=\frac{D^{(2)}}{D},
\end{equation}
where $D=-\frac{5(\lambda_{1}^{2}-\lambda_{2}^{2})^{3}(\lambda_{1}^{2}+\lambda_{2}^{2})^{2}}{\lambda_{2}^{8}}$,
\begin{equation*}
\begin{aligned}
D^{(1)}=&-\frac{1}{\lambda_{2}^{6}}\left(\frac{1}{5}\lambda_{1}^{12}-\frac{8}{5}\lambda_{1}^{10}\lambda_{2}^{2}+\frac{16}{5}\lambda_{1}^{8}\lambda_{2}^{4}
-\frac{32}{5}\lambda_{1}^{6}\lambda_{2}^{6}-\frac{83}{5}\lambda_{1}^{4}\lambda_{2}^{8}-\frac{24}{5}\lambda_{1}^{2}\lambda_{2}^{10}    \right.\\
&\left.\frac{2}{5}\lambda_{2}^{12}-\frac{2}{5}\lambda_{1}^{10}+\frac{1}{10}\lambda_{1}^{8}\lambda_{2}^{2}+\frac{1}{2}\lambda_{1}^{6}\lambda_{2}^{4} +
\frac{43}{10}\lambda_{1}^{4}\lambda_{2}^{6}+\frac{59}{10}\lambda_{1}^{2}\lambda_{2}^{8}-\frac{4}{5}\lambda_{2}^{10} \right)
\end{aligned}
\end{equation*}
and
\begin{equation*}
\begin{aligned}
D^{(2)}=&\frac{1}{\lambda_{1}^{2}\lambda_{2}^{4}}\left(\frac{2}{5}\lambda_{1}^{12}-\frac{24}{5}\lambda_{1}^{10}\lambda_{2}^{2}-\frac{83}{5}\lambda_{1}^{8}\lambda_{2}^{4}
-\frac{32}{5}\lambda_{1}^{6}\lambda_{2}^{6}+\frac{16}{5}\lambda_{1}^{4}\lambda_{2}^{8}-\frac{8}{5}\lambda_{1}^{2}\lambda_{2}^{10}    \right.\\
&\left.\frac{1}{5}\lambda_{2}^{12}-\frac{4}{5}\lambda_{1}^{10}+\frac{59}{10}\lambda_{1}^{8}\lambda_{2}^{2}+\frac{43}{10}\lambda_{1}^{6}\lambda_{2}^{4} +
\frac{1}{2}\lambda_{1}^{4}\lambda_{2}^{6}+\frac{1}{10}\lambda_{1}^{2}\lambda_{2}^{8}-\frac{2}{5}\lambda_{2}^{10} \right).
\end{aligned}
\end{equation*}
Second, considering the invariant $B$, by Lemma 2.1 we have
\begin{equation}
\begin{aligned}
B=&\frac{1}{5}(2f_{3}^{2}-Sf_{4}-f_{4})\\
=&2\frac{\lambda_{1}^{4}}{\lambda_{2}^{4}}\left( h_{441}^{2}+h_{444}^{2}\right)+2\lambda_{1}^{2}\left( h_{442}^{2}+h_{443}^{2}\right)\\
&-2\lambda_{1}^{2}\left( h_{124}^{2}+h_{134}^{2}\right)-2\lambda_{2}^{2}\left( h_{123}^{2}+h_{234}^{2}\right).
\end{aligned}
\end{equation}
Combining (3.30), (3.31) and (3.37), we obtain
\begin{equation}
\begin{aligned}
&\frac{3\lambda_{1}^{4}+2\lambda_{1}^{2}\lambda_{2}^{2}-\lambda_{2}^{4}}{\lambda_{2}^{2}}\left( h_{441}^{2}+h_{444}^{2}\right)+
\frac{\lambda_{1}^{2}\left( 3\lambda_{2}^{4}+2\lambda_{1}^{2}\lambda_{2}^{2}-\lambda_{1}^{4}\right)}{\lambda_{2}^{4}}\left( h_{442}^{2}+h_{443}^{2}\right)\\
=&-\frac{9}{5}\left(  \lambda_{1}^{6}+\lambda_{2}^{6}+\lambda_{1}^{4}\lambda_{2}^{2} +\lambda_{1}^{2}\lambda_{2}^{4} \right)-\frac{2}{5}\left( \lambda_{1}^{4}+\lambda_{2}^{4}\right)    +\lambda_{1}^{2}\lambda_{2}^{2}.
\end{aligned}
\end{equation}
Putting (3.36) into (3.38), after calculating, we find that the lower order terms cancel out. It's amazing! More precisely, we get that
\begin{equation*}
\left(\lambda_{1}^{2}-\lambda_{2}^{2} \right)^{3}\left(\lambda_{1}^{2}+\lambda_{2}^{2} \right)^{3}\left(\lambda_{1}^{4}-\lambda_{1}^{2}\lambda_{2}^{2}+\lambda_{2}^{4}  \right)=0.
\end{equation*}
It's a contradiction with $\lambda_{1}>\lambda_{2}>0$.

\textbf{Case \rmnum{3}: $\lambda_{1}=-\lambda_{4}\neq 0, \lambda_{2}=\lambda_{3}=0$.}\\
Let's begin with the first derivatives of $H$, $S$ and $f_{3}$. By (3.2), (3.4) and (3.6), we get
\begin{equation}
\left\{\begin{aligned}
&h_{111}+h_{221}+h_{331}+h_{441}=\lambda_{1}\langle X(P),e_{1}\rangle,\\
&\lambda_{1}h_{111}-\lambda_{1}h_{441}=0,\\
&\lambda_{1}^{2}h_{111}+\lambda_{1}^{2}h_{441}=0,
\end{aligned}\right.
\end{equation}

\begin{equation}
\left\{\begin{aligned}
&h_{112}+h_{222}+h_{332}+h_{442}=0,\\
&\lambda_{1}h_{112}-\lambda_{1}h_{442}=0,\\
&\lambda_{1}^{2}h_{112}+\lambda_{1}^{2}h_{442}=0,
\end{aligned}\right.
\end{equation}

\begin{equation}
\left\{\begin{aligned}
&h_{113}+h_{223}+h_{333}+h_{443}=0,\\
&\lambda_{1}h_{113}-\lambda_{1}h_{443}=0,\\
&\lambda_{1}^{2}h_{113}+\lambda_{1}^{2}h_{443}=0,
\end{aligned}\right.
\end{equation}
and
\begin{equation}
\left\{\begin{aligned}
&h_{114}+h_{224}+h_{334}+h_{444}=-\lambda_{1}\langle X(P),e_{4}\rangle,\\
&\lambda_{1}h_{114}-\lambda_{1}h_{444}=0,\\
&\lambda_{1}^{2}h_{114}+\lambda_{1}^{2}h_{444}=0.
\end{aligned}\right.
\end{equation}
The above equation systems give us
\begin{equation}
\left\{\begin{aligned}
&h_{111}=h_{112}=h_{113}=h_{114}=h_{441}=h_{442}=h_{443}=h_{444}=0,\\
&\langle X(P),e_{1}\rangle=\frac{1}{\lambda_{1}}\left(h_{221}+h_{331}\right), \langle X(P),e_{4}\rangle=-\frac{1}{\lambda_{1}}\left(h_{224}+h_{334}\right),\\
&h_{222}=-h_{332}, h_{223}=-h_{333}.
\end{aligned}\right.
\end{equation}
Now, we will proceed to the second derivatives of $S$, $f_{3}$ and $f_{4}$. Combining (3.5) and (3.43), we have
\begin{equation}
\left\{\begin{aligned}
&\lambda_{1}(h_{1111}-h_{4411})=-h_{221}^{2}-h_{331}^{2}-2h_{123}^{2}-2h_{124}^{2}-2h_{134}^{2},\\
&\lambda_{1}(h_{1122}-h_{4422})=
\begin{aligned}[t]
&-h_{222}^{2}-h_{332}^{2}-2h_{122}^{2}-2h_{132}^{2}\\
&-2h_{142}^{2}-2h_{232}^{2}-2h_{242}^{2}-2h_{342}^{2},\\
\end{aligned}\\
&\lambda_{1}(h_{1133}-h_{4433})=
\begin{aligned}[t]
&-h_{223}^{2}-h_{333}^{2}-2h_{123}^{2}-2h_{133}^{2}\\
&-2h_{143}^{2}-2h_{233}^{2}-2h_{243}^{2}-2h_{343}^{2},\\
\end{aligned}\\
&\lambda_{1}(h_{1144}-h_{4444})=-h_{224}^{2}-h_{334}^{2}-2h_{124}^{2}-2h_{134}^{2}-2h_{234}^{2}.
\end{aligned}\right.
\end{equation}
Combining (3.7) and (3.43), we have
\begin{equation}
\left\{\begin{aligned}
&\lambda_{1}^{2}(h_{1111}+h_{4411})=2\lambda_{1}(h_{124}^{2}+h_{134}^{2}),\\
&\lambda_{1}^{2}(h_{1122}+h_{4422})=2\lambda_{1}(h_{224}^{2}+h_{234}^{2}-h_{122}^{2}-h_{123}^{2}),\\
&\lambda_{1}^{2}(h_{1133}+h_{4433})=2\lambda_{1}(h_{334}^{2}+h_{234}^{2}-h_{133}^{2}-h_{123}^{2}),\\
&\lambda_{1}^{2}(h_{1144}+h_{4444})=-2\lambda_{1}(h_{124}^{2}+h_{134}^{2}).\\
\end{aligned}\right.
\end{equation}
Combining (3.9) and (3.43), we have
\begin{equation}
\left\{\begin{aligned}
&\lambda_{1}^{3}(h_{1111}-h_{4411})=-2\lambda_{1}^{2}(h_{124}^{2}+h_{134}^{2}),\\
&\lambda_{1}^{3}(h_{1122}-h_{4422})=-2\lambda_{1}^{2}(h_{122}^{2}+h_{123}^{2}+h_{124}^{2}+h_{224}^{2}+h_{234}^{2}),\\
&\lambda_{1}^{3}(h_{1133}-h_{4433})=-2\lambda_{1}^{2}(h_{123}^{2}+h_{133}^{2}+h_{134}^{2}+h_{234}^{2}+h_{334}^{2}),\\
&\lambda_{1}^{3}(h_{1144}-h_{4444})=-2\lambda_{1}^{2}(h_{124}^{2}+h_{134}^{2}).\\
\end{aligned}\right.
\end{equation}
Combining the first equality of (3.44), the first equality of (3.45) and the first equality of (3.46), we get
\begin{equation}
\left\{\begin{aligned}
&h_{122}=h_{123}=h_{133}=0,\\
&h_{1111}=0, h_{4411}=\frac{2}{\lambda_{1}}(h_{124}^{2}+h_{134}^{2}).
\end{aligned}\right.
\end{equation}
Combining the fourth equality of (3.44), the fourth equality of (3.45) and the fourth equality of (3.46), we get
\begin{equation}
\left\{\begin{aligned}
&h_{224}=h_{334}=h_{234}=0,\\
&h_{4444}=0, h_{1144}=-\frac{2}{\lambda_{1}}(h_{124}^{2}+h_{134}^{2}).
\end{aligned}\right.
\end{equation}
Combining (3.47), (3.48), the second equality of (3.44), the second equality of (3.45) and the second equality of (3.46), we get
\begin{equation}
\left\{\begin{aligned}
&h_{222}=h_{332}=h_{223}=h_{333}=0,\\
&h_{1122}=-\frac{1}{\lambda_{1}}h_{124}^{2}, h_{4422}=\frac{1}{\lambda_{1}}h_{124}^{2}.
\end{aligned}\right.
\end{equation}
Combining (3.47), (3.48), the third equality of (3.44), the third equality of (3.45) and the third equality of (3.46), we get
\begin{equation}
h_{333}=h_{223}=h_{332}=h_{222}=0¡£
\end{equation}
Therefore all $h_{ijk}=0$ except $h_{124}$ and $h_{134}$ in the sense of the Codazzi equations (2.2). From (3.47) and (3.48), we get
\begin{equation}
h_{1144}-h_{4411}=-\frac{4}{\lambda_{1}}(h_{124}^{2}+h_{134}^{2}).
\end{equation}
Combining (3.51) and (3.10), we have
\begin{equation}
2(h_{124}^{2}+h_{134}^{2})=\lambda_{1}^{4}.
\end{equation}
Since $S$=constant, (2.7) gives us
\begin{equation}
S(S-1)=6(h_{124}^{2}+h_{134}^{2}).
\end{equation}
Because of $S=2\lambda_{1}^{2}$, combining (3.52) and (3.53), we get
\begin{equation}
\lambda_{1}=\sqrt{2},\ \ h_{124}^{2}+h_{134}^{2}=2,\ \ S=4.
\end{equation}
Moreover, we have $f_{4}=8$. By using of Proposition 2.5, we get $H\equiv 0$ on $M$. Then at $P$, (3.3) gives $0=\nabla_{1}\nabla_{1}H=\lambda_{1}$. It's a contradiction.

We conclude that $S=0$ and $M$ is isometric to $\mathbb{R}^{4}$.

\section{Proof of Theorem 1.4 and Theorem 1.5}
\emph{Proof of Theorem 1.4.} From (2.7), we know that $S=0$ or $S\geq 1$. If $S=0$, then $M$ is isometric to $\mathbb{R}^{4}$. Now, we assume that $S\geq 1$. By Lemma 2.4, it's easy to see that $\inf |H|=0$ since $f_{3}=0$. If $H$ vanishes somewhere, then from Section 3 we know that this case can not occur. Hence $H$ doesn't change sign on $M$. We may assume that $H>0$ on $M$ and $\inf H=0$ without loss of generality. Similar to the proof of Proposition 2.5, we can apply the generalized maximum principle (Lemma 2.3) for $\mathcal{L}$-operator to $-H$. Calculate by taking the limit just like the proof of Proposition 2.5 in all the calculations almost the same as Section 3. We can also conclude that this case can not occur. Thus, $S=0$, i.e. $M$ is isometric to $\mathbb{R}^{4}$. The theorem follows.\hfill$\Box$\\
\emph{Proof of Theorem 1.5.} If $\overline{H}=\inf |H|=0$, that is $f_{3}=0$. By Theorem 1.4, we know that $M$ is isometric to $\mathbb{R}^{4}$. If $\overline{H}=\inf |H|>0$, by Lemma 2.4 and $f_{3}=\frac{3}{4}\overline{H}S-\frac{\overline{H}^{3}}{8}$, we conclude that $M$ is isometric to one of $S^{2}(\sqrt{2})\times \mathbb{R}^{2}$ and $S^{4}(2)$. The theorem follows.\hfill$\Box$\\

\end{document}